\documentclass[12pt, letterpaper]{amsart}

\usepackage{amsmath,amssymb,latexsym, color
}
\usepackage[dvips,centering,includehead,width=15.1cm,height=22.7cm,
paperheight=23.8cm,paperwidth=17cm
]{geometry}
\usepackage{enumitem}
\usepackage{hyperref}
\usepackage{pict2e}
\usepackage{epic,graphicx}
\usepackage{rotating}
\usepackage{tikz-cd}
\usepackage{ccaption}
\changecaptionwidth
\captionwidth{5.6in}


\renewcommand{\ref}{\hyperref}

\def\Z{{\mathbb Z}}
\def\R{{\mathbb R}}
\def\Q{{\mathbb Q}}
\def\N{{\mathbb N}}\def\T{{\mathbb T}}
\def\C{{\mathbb C}}\def\V{{\mathbb V}}

\def\PP{{\mathbb P}}

\newcommand{\bi}{\boldsymbol{i}}

\newcommand{\bu}{\boldsymbol{u}}
\newcommand{\bz}{\boldsymbol{z}}
\newcommand{\bx}{\boldsymbol{x}}\newcommand{\bP}{\mathbf{P}}

\newcommand{\bn}{\boldsymbol{n}}

\DeclareMathOperator{\Aut}{Aut}

\DeclareMathOperator{\conv}{conv}

\DeclareMathOperator{\ord}{ord}

\DeclareMathOperator{\Sing}{Sing}

\newtheorem{theorem}{Theorem}[section]
\newtheorem{proposition}[theorem]{Proposition}

\newtheorem{lemma}[theorem]{Lemma}

\theoremstyle{definition}

\newtheorem{remark}[theorem]{Remark}

\numberwithin{equation}{section}



\newcommand{\eps}{{\varepsilon}}


\begin{document}

\title{Tropical vertex and real enumerative geometry}

\author{Eugenii Shustin}
\address{School of Mathematical Sciences, Tel Aviv
University, Tel Aviv 6997801,
Israel}
\email{shustin@tauex.tau.ac.il}

\thanks
{\emph{2020 Mathematics Subject Classification}
Primary 14N10
Secondary 14T20
}

\thanks{The author was supported by the Bauer-Neuman Chair in Real and Complex Geometry}

\date{
\today
}

\begin{abstract}
We show that the commutator relations in the refined tropical vertex group can be expressed via the enumeration of suitable real rational curves in toric surfaces.
\end{abstract}

\keywords{tropical vertex, relative Gromov-Witten invariants, relative Welschinger invariants}


\maketitle





\section{Introduction}\label{secrtv1}

The tropical vertex group $\V$ is a subgroup of the group $\Aut_{\C[[t]]}\C[x,x^{-1},y,y^{-1}][[t]]$ of automorphisms of the $\C[[t]]$-algebra $\C[x,x^{-1},y,y^{-1}][[t]]$ (see, for instance, \cite[Sections 0.1, 0.2]{GPS}). It is
the completion with respect to the ideal $\langle t\rangle$ of the subgroup generated by all elements $\theta_{(a,b),f}$ such that
$$(a,b)\in\Z^2\setminus\{0\},\quad f=1+tx^ay^b\cdot g(x^ay^b,t),\ g(z,t)\in\C[z][[t]],$$
$$\theta_{(a,b),f}(x)=f^{-b}\cdot x,\quad \theta_{(a,b),f}(y)=f^a\cdot y.$$
The tropical vertex group appears in various settings, e.g. in the study of affine
structures, mirror symmetry, tropical geometry, in wall-crossing formulas for Donaldson-Thomas invariants, see an overview in \cite[Sections 0.2 and 1.1]{GPS} and \cite[Section 1]{FS}. The elements of $\V$ can be regarded as automorphisms of the formal one-parameter family of tori $(\C^*)^2$ which preserve the standard symplectic form $(xy)^{-1}dx\wedge dy$ in the fibers.

The main result of \cite{GPS} is an expression of the commutators in $\V$ via Gromov-Witten invariants
of certain toric surfaces blown-up at generic points on the toric divisors. Furthermore, it is shown that these Gromov-Witten invariants develop in sums of the relative Gromov-Witten invariants
$N^\C_{\alpha,\alpha'}(\bP_a,\bP_b)$ that count complex rational curves in the original toric surfaces with prescribed tangency conditions on toric divisors. Moreover, $N^\C_{\alpha,\alpha'}(\bP_a,\bP_b)=N^{trop}_{\alpha,\alpha'}(\bP_a,\bP_b)$, where the latter number enumerates appropriate plane rational tropical curves matching certain relative constraints and counted with Mikhalkin's weights \cite{Mi} (see also \cite{GM} and \cite{NS}).
S. A. Filippini and J. Stoppa \cite{FS} extended this result to the deformed tropical vertex group
$$\widehat\V\subset\Aut_{\C[[t]]}\C[\widehat x,\widehat x^{-1},\widehat y,\widehat y^{-1}][[t]],\quad\text{where}\ \widehat x\widehat y=q\widehat y\widehat x.$$ Namely, they showed that a similar formula for the commutators holds when substituting the relative refined tropical Block-G\"ottsche invariants $\widehat N^{trop}_{\alpha,\alpha'}(\bP_a,\bP_b)(q)$ (cf. \cite{BG}) for the numbers $N^{trop}_{\alpha,\alpha'}(\bP_a,\bP_b)$.
For $q=1$, the Filippini-Stoppa formula specializes to the Gross-Pandharipande-Siebert formula.

An interpretation of the $q$-refined tropical vertex was elaborated by Bousseau \cite{Bou}; it was based on the integration of pull-backs by evaluation maps combined with appropriate $\lambda$-classes along the virtual fundamental classes in the moduli spaces of marked complex curves of higher genus as developed in \cite{Bou0}.
Our approach is rather different: we relate the $q$-refined tropical vertex to enumeration of real rational curves in toric surface in the spirit of \cite{Mi1}.
It is worth to notice that Arg\"uz and Bousseau \cite[Section 1.3]{AB} proposed to study the tropical vertex from the real point of view. The {\it results} of this note are as follows:
\begin{enumerate}\item[(A)] the refined tropical invariants in the Filippini-Stoppa formula can be recovered from the enumeration of oriented real rational curves on toric surfaces with a given Mikhalkin's quantum index \cite{Mi1}, see Proposition \ref{prtv2};
\item[(B)] the specialization at $q=-1$ yields a series of enumerative invariants counting real rational curves on toric surfaces matching suitable constraints relative to the toric divisors, see Propositions \ref{prtv1} and \ref{lrtv3}.
    \end{enumerate}
    We point out that the above real enumerative invariants are of different nature. The former invariant counts embedded holomorphic Riemann surfaces with boundary on the real part of the ambient toric surface, it is related to the Block-G\"ottsche invariant multiplied by a certain factor (precisely as in \cite{Bou0,Bou} and \cite{Mi}). In turn, the latter invariant
    counts real curves and it is related to the evaluation of the genuine Block-G\"ottsche invariant at $q=-1$.

We set preparation details in Section \ref{secrtv3} and formulate precisely our results in Sections \ref{secrtv7} and \ref{secrtv6}, respectively. In Section \ref{app} we prove the invariance of the count of real rational curves matching constraints relative to toric divisors which is discussed in Section \ref{secrtv6}.

\section{Tropical vertex and its deformation}\label{secrtv3}

\subsection{Commutators in the tropical vertex group}\label{secrtv4}
Denote
$$S_{\ell_1}=\theta_{(1,0),(1+tx)^{\ell_1}},\quad T_{\ell_2}=\theta_{(0,1),(1+ty)^{\ell_2}},\quad
\ell_1,\ell_2\in\Z,\ \ell_1,\ell_2>0.$$
By \cite[Theorem 0.1]{GPS},
\begin{equation}T_{\ell_2}^{-1}\circ S_{\ell_1}\circ T_{\ell_2}\circ S_{\ell_1}^{-1}=\vec\prod_{(a,b)}\theta_{(a,b),f_{a,b}},\label{ertv3}\end{equation}
where the product means the composition over all primitive vectors $(a,b)\in\N^2\subset\Z^2$ ordered by the increasing slopes,
\begin{equation}\log f_{a,b}=\sum_{k\ge1}c^k_{a,b}(\ell_1,\ell_2)\cdot(tx)^{ak}(ty)^{bk},\label{ertv4}\end{equation}
and the coefficients $c^k_{a,b}(\ell_1,\ell_2)$ admit the following presentation:
\begin{equation}c^k_{a,b}(\ell_1,\ell_2)=\sum_{\renewcommand{\arraystretch}{0.6}
\begin{array}{c}
\scriptstyle{|\bP_{ka}|=ka}\\
\scriptstyle{\#\bP_{ka}=\ell_1}\end{array}}\sum_{\renewcommand{\arraystretch}{0.6}
\begin{array}{c}
\scriptstyle{|\bP_{kb}|=kb}\\
\scriptstyle{\#\bP_{kb}=\ell_2}\end{array}}N^\C_{a,b}(\bP_{ka},\bP_{kb}).\label{ertv5}\end{equation}
In the latter formula, $\bP_{ka}$ (resp., $\bP_{kb}$) ranges over all ordered partitions
$$p_1+...+p_{\ell_1}=ka,\ p_i\ge0,\quad (\text{resp.,}\ p'_1+...+p'_{\ell_2}=kb,\ p'_j\ge0),$$
and $N^\C_{a,b}(\bP_{ka},\bP_{kb})\in\Q$ is the Gromov-Witten invariant defined for the following moduli space $\overline{\mathcal M}_0(X_{a,b}^{\ell_1,\ell_2}/D_{out},\gamma)$ of virtual dimension zero (see all details in \cite[Section 4]{GPS}):
\begin{itemize} \item $X_{a,b}$ is the toric surface associated with the lattice triangle $\qquad$ $\qquad$ $\qquad$
$T_{a,b}=\conv\{(0,0),(a,0),(0,b)\}$, and $\pi:X_{a,b}^{\ell_1,\ell_2}\to X_{a,b}$ is the blow-up of $\ell_1$ generic points on the toric divisor $D_a$ associated with the edge $[(0,0),(a,0)]$ and of $\ell_2$ generic points on the toric divisor $D_b$ associated with the edge $[(0,0),(0,b)]$;
\item $\gamma\in H_2(X_{a,b}^{\ell_1,\ell_2})$ is the class represented by the divisor $\qquad$ $\qquad$ $\qquad$ $\qquad$ \mbox{$k\pi^*\Delta-\sum_{i=1}^{\ell_1}p_iE_i-\sum_{j=1}^{\ell_2}p'_jE'_j$} with $\Delta$ being the zero-divisor of a generic polynomial $\sum_{(i,j)\in T_{a,b}\cap\Z^2}a_{ij}x^iy^j$, and $E_1,...,E_{\ell_1},E'_1,...,E'_{\ell_2}$ the exceptional divisors of the blow-ups;
     \item $\overline{\mathcal M}_0(X_{a,b}^{\ell_1,\ell_2}/D_{out},\gamma)$ is the compactified moduli space of stable genus zero maps with a full contact order $k$ at an unspecified point of the toric divisor $D_{out}$ associated with the edge $[(a,0),(0,b)]$. \end{itemize}
Furthermore (see \cite[Theorem 2.8]{GPS}), the numbers $N^\C_{a,b}(\bP_{ka},\bP_{kb})$ can be expressed as
\begin{equation}N^\C_{a,b}(\bP_{ka},\bP_{kb})=\sum_{\alpha}\sum_{\alpha'}c(\bP_{ka},\bP_{bk},\alpha,\alpha')
N^\C_{\alpha,\alpha'}(\bP_a,\bP_b),\label{ertv2}\end{equation}
where
\begin{itemize}\item $\alpha$ (resp., $\alpha'$) ranges over collections of ordered partitions of each positive summand in $\bP_{ka}$ (resp., in $\bP_{kb}$) into positive summands:
$$p_i=\sum_j\alpha_{ij}\quad\forall p_i>0,\quad p'_i=\sum_j\alpha'_{ij}\quad\forall p'_i>0,$$ \item $c(\bP_{ka},\bP_{bk},\alpha,\alpha')\in\Q$ are coefficients depending only on $\bP_{ka},\bP_{kb},\alpha,\alpha'$,
\item $N^\C_{\alpha,\alpha'}(\bP_a,\bP_b)$ counts complex rational curves $\bn:\PP^1\to X_{a,b}$ realizing the divisor class $k\Delta$ and meeting the divisor $D_a$ at $\#\alpha$ ordered generic fixed points with multiplicities $\{\alpha_{ij}\}$, meeting the divisor $D_b$ at $\#\alpha'$ ordered generic fixed points with multiplicities $\{\alpha'_{ij}\}$, meeting the divisor $D_{out}$ at one point with multiplicity $k$ so that
    $$\bn^*D_a=\sum_{i,j}\alpha_{ij}w_{ij},\quad\bn^*D_b=\sum_{ij}\alpha'_{ij}w'_{ij},\quad\bn^*D_0=kw_0$$
    with $w_{ij}$'s, $w'_{ij}$'s, and $w_0$ being distinct points on $\PP^1$.
\end{itemize}

At last, the numbers $N^\C_{\alpha,\alpha'}(\bP_a,\bP_b)$ are equal to the following tropical enumerative invariants. Fix a generic sequence of $\#\alpha$ vertical lines $\{L(\alpha_{ij}\}_{i,j}\subset\R^2$ numbered by the elements of $\alpha$ and a generic sequence of $\#\alpha'$ horizontal lines $\{L(\alpha'_{ij})\}_{i,j}\subset\R^2$ numbered by the elements of $\alpha'$. Denote by ${\mathcal T}_0(\alpha,\alpha',\{L(\alpha_{ij})\}_{i,j},\{L(\alpha'_{ij})\}_{i,j})$ the set of rational tropical curves with $1+\#\alpha+\#\alpha'$ ends such that one end of weight $k$ is directed by the vector $(b,a)$, $\#\alpha$ (resp., $\#\alpha'$) ordered ends are directed by the vectors $(0,-1)$ (resp., $(-1,0)$), are equipped with the weights $\alpha_{ij}$ (resp., $\alpha'_{ij}$) and lie on the corresponding line $L(\alpha_{ij})$ (resp., $L(\alpha'_{ij})$).
By \cite[Proposition 4.13]{Mi}, the set ${\mathcal T}_0(\alpha,\alpha',\{L(\alpha_{ij})\}_{i,j},\{L(\alpha'_{ij})\}_{i,j})$ is finite and consists of only trivalent curves. Then (see \cite[Theorem 3.4]{GPS})
\begin{equation}N^\C_{\alpha,\alpha'}(\bP_a,\bP_b)=N^{trop}_{\alpha,\alpha'}(\bP_{ka},\bP_{kb}):=\sum_\Gamma\mu(\Gamma),
\label{ertv6}\end{equation}
where $\Gamma$ ranges over the set ${\mathcal T}_0(\alpha,\alpha',\{L(\alpha_{ij})\}_{i,j},\{L(\alpha'_{ij})\}_{i,j})$ and
$$\mu(\Gamma)=\left(\prod_{i,j}\alpha_{ij}\right)^{-1}\left(\prod_{i,j}\alpha'_{ij}\right)^{-1}
            \prod_{V\in\Gamma^0}\mu(\Gamma,V),$$ where $\Gamma^0$ is the set of (trivalent) vertices of $\Gamma$, $\mu(\Gamma,V)$ is the Mikhalkin multiplicity of the vertex $V$\ \footnote{The number $N^{trop}_{\alpha,\alpha'}(\bP_{ka},\bP_{kb})$ does not depend on the choice of a generic constraint
            $\{L(\alpha_{ij})\}_{i,j},\{L(\alpha'_{ij})\}_{i,j}$, see \cite{Mi} and \cite{GM}}.
            Thus, formula (\ref{ertv5}) turns into
            \begin{equation}c^k_{a,b}(\ell_1,\ell_2)=\sum_{\renewcommand{\arraystretch}{0.6}
\begin{array}{c}
\scriptstyle{|\bP_{ka}|=ka}\\
\scriptstyle{\#\bP_{ka}=\ell_1}\end{array}}\sum_{\renewcommand{\arraystretch}{0.6}
\begin{array}{c}
\scriptstyle{|\bP_{kb}|=kb}\\
\scriptstyle{\#\bP_{kb}=\ell_2}\end{array}}\sum_{\alpha,\alpha'}c(\bP_{ka},\bP_{bk},\alpha,\alpha')
N^{trop}_{\alpha,\alpha'}(\bP_{ka},\bP_{kb})
            \label{ertv9}\end{equation}

\subsection{Commutators in the deformed tropical vertex group}\label{secrtv5}
S. A. Filippini and J. Stoppa \cite{FS} considered the following deformation of the algebra $\C[x,x^{-1},y,y^{-1}][[t]]$:
\begin{equation}\C[\widehat x,\widehat x^{-1},\widehat y,\widehat y^{-1}][[t]],\quad\text{where}\ \widehat x\widehat y=q\widehat y\widehat x,\label{ertv8}\end{equation} and showed that the commutator
$$\widehat T_{\ell_2}^{-1}\circ\widehat S_{\ell_1}\circ\widehat T_{\ell_2}\circ\widehat S_{\ell_1}^{-1}$$
of the automorphisms
$$\widehat S_{\ell_1}=\theta_{(1,0),(1+q^{1/2}t\widehat x)^{\ell_1}},\quad \widehat T_{\ell_2}=\theta_{(0,1),(1+q^{1/2}t\widehat y)^{\ell_2}},\quad\ell_1,\ell_2>0,$$
admits the development similar to (\ref{ertv3}), (\ref{ertv4}), (\ref{ertv5}), and (\ref{ertv9}), in which the numerical tropical invariants $N^{trop}_{\alpha,\alpha'}(\bP_{ka},\bP_{kb})$ should be replaced with the relative Block-G\"ottsche tropical invariants as in \cite[Definition 7.2]{BG} 
\begin{equation}\widehat N^{trop}_{\alpha,\alpha'}(\bP_{ka},\bP_{kb})(q):=\sum_{\Gamma}\widehat\mu(\Gamma)(q),\label{ertv7}\end{equation}
where $\Gamma$ ranges over the set ${\mathcal T}_0(\alpha,\alpha',\{L(\alpha_{ij})\}_{i,j},\{L(\alpha'_{ij})\}_{i,j})$ and
\begin{equation}\widehat\mu(\Gamma)(q)=\left(\prod_{i,j}[\alpha_{ij}]_q\right)^{-1}\left(\prod_{i,j}
[\alpha'_{ij}]_q\right)^{-1}
            \prod_{V\in\Gamma^0}[\mu(\Gamma,V)]_q,\label{ertv11}\end{equation} where
            $$[\theta]_q=\frac{q^{\theta/2}-q^{-\theta/2}}{q^{1/2}-q^{-1/2}},\qquad\forall\ \theta\in\Z.$$
            Observe that $[\theta]_1=\theta$, and hence the Filippini-Stoppa formulas specializes at $q=1$ to the Gross-Pandharipande-Siebert formulas (\ref{ertv3}), (\ref{ertv4}), (\ref{ertv5}), and (\ref{ertv9}).

            Note also that the invariance of the expression (\ref{ertv7}) with respect to the choice of the constraint $\{L(\alpha_{ij})\}_{i,j},\{L(\alpha'_{ij})\}_{i,j}$ follows from \cite[Theorem 1]{IM}.

\section{Relative Block-G\"ottsche invariants and Mikhalkin's quantization}\label{secrtv7}

Here we show that the refined invariants $\widehat N^{trop}_{\alpha,\alpha'}(\bP_{ka},\bP_{kb})(q)$ can be recovered from enumeration of appropriate real rational curves.

We start with the following preparatory statement. Let $\beta=\{\beta_i\}_{i=1,...m}$, (resp., $\beta'=\{\beta'_i\}_{i=1,...,m'}$) be an ordered partition of $ka$ (resp., $kb$), and let $\bz=\{z_i\}_{i=1,...,m}$ (resp., $\bz'=\{z'_i\}_{i=1,...,m'}$) be a sequence of distinct points on the toric divisor $D_a\subset X_{a,b}$ (resp., on the toric divisor $D_b\subset X_{a,b}$). Denote by ${\mathcal M}_{0,m+m'+1}(X_{a,b},k,\beta,\beta',\bz,\bz')$ the space of isomorphism classes of stable maps of marked rational curves $\qquad$ $[\bn:(\PP^1,(\bu,\bu',u_0))\to X_{a,b}]$ such that $\bn_*[\PP^1]=k\Delta$ and
\begin{itemize}\item the $m$ marked points $\bu=\{u_i\}_{i=1,...,n}$ are taken by $\bn$ to the sequence $\bz$, while the $m'$ marked points $\bu'=\{u'_i\}_{i=1,...,n'}$ are taken by $\bn$ to the sequence $\bz'$, and $z_0:=\bn(u_0)\in D_{out}\subset X_{a,b}$,
\item $\bn^*(z_i)=\beta_iu_i$ for all $i=1,...,m$, $\bn^*(z'_i)=\beta'_iu'_i$ for all $i=1,...,m'$, and $\bn^*(z_0)=ku_0$.
\end{itemize}

\begin{lemma}\label{lrtv1}
If $\bz$ and $\bz'$ are in general position on $D_a$ and $D_b$, respectively, then ${\mathcal M}_{0,m+m'+1}(X_{a,b},\beta,\beta',\bz,\bz')$ is finite, and for each of its elements, the map $\bn:\PP^1\to X_{a,b}$ is a birational immersion onto a curve which is smooth along the toric divisors.
\end{lemma}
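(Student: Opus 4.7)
\medskip\noindent\textbf{Proof plan.}
I would derive both the finiteness and the geometric claims from the Nishinou--Siebert / Mikhalkin correspondence theorem \cite{Mi,NS}, used essentially as in \cite[Theorem 3.4]{GPS} and in Section~\ref{secrtv4} above. First, a dimension count: the relative moduli space of genus-zero stable maps to $X_{a,b}$ of class $k\Delta$, with tangency profile $\beta$ on $D_a$, profile $\beta'$ on $D_b$ and full contact $(k)$ on $D_{out}$, all contact points left free, has virtual dimension $m+m'$, so fixing the images $\bz,\bz'$ cuts this to zero. Thus ${\mathcal M}_{0,m+m'+1}(X_{a,b},\beta,\beta',\bz,\bz')$ has the correct expected dimension for the claimed finiteness.

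Next, for $\bz,\bz'$ in sufficiently general position, the correspondence theorem provides a weighted bijection between this complex moduli space and the tropical set ${\mathcal T}_0(\beta,\beta',\{L(\beta_i)\},\{L(\beta'_j)\})$ defined in Section~\ref{secrtv4} (with each $\beta_i$ and $\beta'_j$ read as the trivial one-part partition of itself), where $L(\beta_i)$ and $L(\beta'_j)$ are the vertical and horizontal lines through the tropicalizations of $z_i$ and $z'_j$. By \cite[Proposition 4.13]{Mi} this tropical set is finite and each of its members is trivalent, so ${\mathcal M}_{0,m+m'+1}$ is finite.

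For birationality I would run a Riemann--Hurwitz argument: if $\bn=\bn'\circ\phi$ factored through a cover $\phi:\PP^1\to\PP^1$ of degree $d\ge2$, then, since $\bz$, $\bz'$ and $\bn(u_0)$ are pairwise distinct and each $\bn^{-1}(z_i)$, $\bn^{-1}(z'_j)$, $\bn^{-1}(\bn(u_0))$ set-theoretically reduces to $\{u_i\}$, $\{u'_j\}$, $\{u_0\}$ respectively, $\phi$ would be fully ramified of index $d$ at each of the $m+m'+1$ marked tangency points, forcing $-2\ge -2d+(m+m'+1)(d-1)$, i.e.\ $m+m'+1\le 2$; since $ka,kb>0$ imply $m\ge 1$ and $m'\ge 1$, this is impossible, and $\bn$ is birational onto its image. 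The immersed character of $\bn$ and the smoothness of $\bn(\PP^1)$ along $D_a\cup D_b\cup D_{out}$ with the prescribed tangency multiplicities then follow from the local models of the correspondence: each trivalent vertex of $\Gamma\in{\mathcal T}_0$ lifts to a smooth local branch of $\bn$, and each unbounded edge of weight $w$ is realized by a single smooth branch of $\bn(\PP^1)$ meeting the corresponding toric divisor at one smooth point with intersection multiplicity $w$, as built into the constructions of \cite{Mi,NS}.

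The main obstacle is this last point: extracting from the correspondence not merely the count but the concrete local geometry of every lift at the toric boundary, and checking that the local smooth branches patch into a globally well-defined birational immersion smooth along $D_a\cup D_b\cup D_{out}$. The required transversality is already carried out in \cite[Section~5]{GPS} in the more delicate blow-up setting used in Section~\ref{secrtv4}, of which the present lemma (on the unblown $X_{a,b}$) is a strictly simpler case.
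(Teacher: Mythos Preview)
The paper does not give an independent proof here; it simply records that the lemma is a special case of \cite[Lemma~2.4]{IS}, whose argument (in the style of Section~\ref{app} of the present paper) is a direct algebro-geometric one: dimension counts, cohomology vanishing of the type~(\ref{ertv18}), and bounds on singular branches via \cite{GuS}. Your route is genuinely different. You import the Mikhalkin/Nishinou--Siebert correspondence to get finiteness and to read off the local shape of each curve from the trivalent tropical model, and you add a self-contained Riemann--Hurwitz argument for birationality. The Riemann--Hurwitz step is correct and clean (indeed $\bn^{-1}(z_i)=\{u_i\}$ forces total ramification of any covering factor at every marked point, and $m+m'+1\ge 3$ kills $d\ge 2$); this part is arguably nicer than invoking an external lemma.

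Two points deserve tightening. First, the correspondence theorems as stated in \cite{Mi,NS} apply to constraints placed near a toric degeneration (``tropical general position''). The lemma asks for the conclusion on a Zariski-open set of $(\bz,\bz')$. You are implicitly using that the locus of bad constraints (infinite fibre, or a fibre containing a non-immersed map, or a map whose image is singular on a toric divisor) is Zariski-closed; this is true by semicontinuity over the proper family $\overline{\mathcal M}_{0,n}(X_{a,b},k\Delta)\to(\text{configurations})$, but you should say so. Second, your sentence ``each trivalent vertex of $\Gamma$ lifts to a smooth local branch of $\bn$'' is not quite the right formulation: the local piece over a vertex is a rational curve in a toric surface that is typically singular (nodal) in the torus. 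What you need, and what the constructions in \cite{Mi,NS,Sh05} actually give, is that these local pieces are \emph{immersed} maps from $\PP^1$ and are torically transverse, i.e.\ meet each local toric divisor in a single smooth branch of the prescribed multiplicity. Once stated this way your concluding claim is supported by the references you cite; as written it slightly overstates what the local models provide. With these two clarifications your argument stands, and it trades the black-box citation of \cite{IS} for the heavier black box of the full tropical correspondence.
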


This is a particular case of \cite[Lemma 2.4]{IS}.

Now, let us be given the data $a,b,k,\alpha,\alpha'$ as in Section \ref{secrtv3}. In the real part $X_{a,b}(\R)$ of the toric surface $X_{a,b}(\C)$, take the nonnegative quadrant $X^+_{a,b}(\R)$ and denote by $D^+_a,D^+_b,D^+_{out}$ the segments of the toric divisors $D_1,D_b,D_{out}$ that form the boundary of $X^+_{a,b}(\R)$. Then choose a sequence $\bz$ of $\#\alpha$ generic points on $D^+_a$ and a sequence $\bz'$ of $\#\alpha'$ generic points on $D^+_b$.
Denote by $2\alpha$ and $2\alpha'$ the partitions of $2ka$ and $2kb$, respectively, obtained by the doubling of the summands of $\alpha$ and $\alpha'$. Consider the space ${\mathcal M}_{0,n}^\R(X_{a,b},2k,2\alpha,2\alpha',\bz,\bz')$
of equivariant isomorphism classes of stable conjugation-invariant maps of real $n$-marked rational curves $[\bn:(\PP^1,(\bu,\bu',u_0))\to X_{a,b}]$, where $n=\#\alpha+\#\alpha'+1$, $\bn_*[\PP^1]=2k\Delta$, 
and
\begin{itemize}\item $\#\bu=\#\alpha$, $\#\bu'=\#\alpha'$, $\bu\cup\bu'\subset\PP^1_\R$, $u_0\in\PP^1_\R$;
\item $\bn(\bu)=\bz$, $\bn(\bu')=\bz'$ so that $\bn^*(z_{ij})=2\alpha_{ij}u_{ij}$ and $\bn^*(z'_{ij})=2\alpha'_{ij}u_{ij}$ for all relevant $i,j$; furthermore $\bn(u_0)=z_0\in D^+_{out}$ so that  $\bn^*(z_0)=2ku_0$.
\end{itemize}
The choice of a component of $\PP^1_\C\setminus\PP^1_\R$ defines an orientation (called {\it complex orientation}) on $\PP^1_\R$ and on the immersed (by Lemma \ref{lrtv1}) circle $\bn(\PP^1_\R)\subset X^+_{a,b}(\R)$ of an element $\xi=[\bn(\PP^1,\bu,\bu',u_0)\to X_{a,b}]\in {\mathcal M}_{0,n}^\R(X_{a,b},2k,2\alpha,2\alpha',\bz,\bz')$. Denote by ${\mathcal M}_{0,n}^{\R,or}(X_{a,b},2k,2\alpha,2\alpha',\bz,\bz')$ the set of elements of ${\mathcal M}_{0,n}^\R(X_{a,b},2k,2\alpha,2\alpha',\bz,\bz')$ equipped with a complex orientation. By \cite[Theorem 3.1 and Definition 3.2]{Mi1} each element $\xi\in{\mathcal M}_{0,n}^{\R,or}(X_{a,b},2k,2\alpha,2\alpha',\bz,\bz')$ possesses a quantum index $QI(\xi)$ which in our situation belongs to $2\Z$ and ranges in $[-2k^2ab,2k^2ab]$.

To enumerate the elements of the set ${\mathcal M}_{0,n}^{\R,or}(X_{a,b},2k,2\alpha,2\alpha',\bz,\bz')$ which is finite by Lemma \ref{lrtv1}, we introduce a suitably modified Welschinger sign (cf. \cite[Formula (4)]{IS}). Choose an orientation of $\partial X^+_{a,b}(\R)$ induced by the form $dx\wedge dy$. For an element
$$\xi=[\bn:(\PP^1,\bu,\bu',u_0)\to X_{a,b}]\in{\mathcal M}_{0,n}^{\R,or}(X_{a,b},2k,2\alpha,2\alpha',\bz,\bz'),$$
put
\begin{equation}w^+(\xi)=(-1)^{e_+(\xi)}\cdot(-1)^r,\label{e-wel}\end{equation}
where \begin{itemize} \item $e_+(\bn(\PP^1))$ is the sum of the numbers $e(\xi,z)$ over all singular points $z$ of the curve $\bn(\PP^1)$ in the positive quadrant $X^+_{a,b}(\R)$, where $e(\xi,z)$ is the sum of the intersection numbers of the local complex conjugate branches of $\bn(\PP^1)$ centered at $z$ \footnote{If $\bn(\PP^1)$ is a nodal curve, then $e_+(\xi)$ is just the number of real elliptic nodes of $\bn(\PP^1)$ in $X^+_{a,b}(\R)$ ({\it elliptic} means equivalent to $x^2+y^2=0$ over the reals).}, \item $r$ is the number of those points in $\bz\cup\bz'\cup\{z_0\}$, where the intersection multiplicity of $\bn(\PP^1)$ with the toric divisor is divisible by $4$, and the complex orientation of $\bn(\PP^1_\R)$ at that point is opposite to the orientation of $\partial X^+_{a,b}(\R)$. \end{itemize}

By \cite[Theorem 2.5]{IS}, the number
$$W_0^\varkappa(a,b,k,2\alpha,2\alpha'):=\sum_{\renewcommand{\arraystretch}{0.6}
\begin{array}{c}
\scriptstyle{\xi\in{\mathcal M}_{0,n}^{\R,or}(X_{a,b},2k,2\alpha,2\alpha',\bz,\bz')}\\
\scriptstyle{QI(\xi)=\varkappa}\end{array}}w^+(\xi)$$
does not depend on the choice of generic sequences $\bz\subset D^+_a$, $\bz'\subset D^+_b$, for any even $\varkappa\in[-2k^2ab,2k^2ab]$.


\begin{proposition}\label{prtv2}
In the notations introduced in the preceding paragraphs, 
$$\sum_{\renewcommand{\arraystretch}{0.6}
\begin{array}{c}
\scriptstyle{-2k^2ab\le\varkappa\le2k^2ab}\\
\scriptstyle{\varkappa\in2\Z}\end{array}}(-1)^{k^2ab/2-\varkappa/4}\cdot W_0^\varkappa(a,b,k,2\alpha,2\alpha')\cdot q^{\varkappa/4}$$
\begin{equation}=\widehat N^{trop}_{\alpha,\alpha'}(\bP_{ka},\bP_{kb})(q)\cdot\prod_{i,j}[\alpha_{ij}]_q\cdot\prod_{i,j}[\alpha'_{ij}]_q
\cdot(q^{1/2}-q^{-1/2})^{\#\alpha+\#\alpha'-1}.\label{edop1}\end{equation}
\end{proposition}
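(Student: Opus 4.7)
The strategy is to apply the tropical limit to the real algebraic enumeration, decompose the count vertex-by-vertex, and then match the result with the explicit form of the relative Block-G\"ottsche invariant. The role of the $q$-refinement is to package the Mikhalkin quantum index $QI(\xi)$ as the $q$-grading, and the doubling $2\alpha,2\alpha',2k$ is what makes real lifts available at every trivalent vertex.

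First, I would choose the configurations $\bz\subset D_a^+$ and $\bz'\subset D_b^+$ in tropical position, with logarithmic coordinates close to the levels prescribed by the constraints $\{L(\alpha_{ij})\}$ and $\{L(\alpha'_{ij})\}$. By the tropical-to-real correspondence of Mikhalkin \cite{Mi1} and the relative patchworking statement \cite[Lemma 2.4 and Theorem 2.5]{IS}, every element $\xi\in{\mathcal M}_{0,n}^{\R,or}(X_{a,b},2k,2\alpha,2\alpha',\bz,\bz')$ tropicalizes to a trivalent rational tropical curve $\Gamma\in{\mathcal T}_0(\alpha,\alpha',\{L(\alpha_{ij})\}_{i,j},\{L(\alpha'_{ij})\}_{i,j})$; conversely, each such $\Gamma$ lifts to a finite collection of oriented real rational curves indexed by local combinatorial data at the vertices of $\Gamma$. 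This converts the sum over $\xi$ into a double sum over $\Gamma$ and over real lifts of $\Gamma$.

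Next, I would perform the local analysis at each trivalent vertex $V$ with Mikhalkin multiplicity $\mu_V=\mu(\Gamma,V)$. Using the additivity of the quantum index over vertices \cite[Theorem 3.1]{Mi1}, the doubled tangency conditions yield, for each vertex, exactly two oriented real lifts with opposite complex orientation, carrying local quantum indices $\pm 2\mu_V$ and modified Welschinger signs whose ratio encodes the parity of the elliptic node emerging at $V$ under orientation reversal. Thus the oriented real lifts of $\Gamma$ are parametrized by sign assignments $\sigma\colon V(\Gamma)\to\{+1,-1\}$, with $QI(\xi)=2\sum_{V}\sigma(V)\mu_V$. Since $\sum_V\mu_V=k^2ab$ (twice the area of the Newton polygon $kT_{a,b}$), the identity $k^2ab/2-\varkappa/4=\sum_{\sigma(V)=-1}\mu_V$ makes the global sign $(-1)^{k^2ab/2-\varkappa/4}$ factor as a product of local signs, which I would show combines with $w^+(\xi)$ into the expected coefficient $(-1)^{|\sigma^{-1}(-1)|}$ in the expansion of $\prod_V(q^{\mu_V/2}-q^{-\mu_V/2})$.

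Assembling these ingredients gives
\begin{equation*}
\sum_{\varkappa}(-1)^{k^2ab/2-\varkappa/4}\,W_0^\varkappa(a,b,k,2\alpha,2\alpha')\,q^{\varkappa/4}
=\sum_{\Gamma}\prod_{V\in\Gamma^0}\bigl(q^{\mu(\Gamma,V)/2}-q^{-\mu(\Gamma,V)/2}\bigr).
\end{equation*}
A trivalent rational tree with $\#\alpha+\#\alpha'+1$ ends has $\#\alpha+\#\alpha'-1$ vertices, so each factor $q^{\mu_V/2}-q^{-\mu_V/2}=[\mu_V]_q(q^{1/2}-q^{-1/2})$ contributes one power of $(q^{1/2}-q^{-1/2})$; combining with definitions (\ref{ertv11}) and (\ref{ertv7}), the right-hand side becomes $\widehat N^{trop}_{\alpha,\alpha'}(\bP_{ka},\bP_{kb})(q)\cdot\prod_{i,j}[\alpha_{ij}]_q\cdot\prod_{i,j}[\alpha'_{ij}]_q\cdot(q^{1/2}-q^{-1/2})^{\#\alpha+\#\alpha'-1}$, as claimed.

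\textbf{The main obstacle} is the vertex-local analysis in the second step: verifying that, under the doubling of the multiplicities, there are precisely two oriented real lifts per vertex, computing their quantum indices as $\pm 2\mu_V$, and showing that the modified Welschinger sign $w^+(\xi)$ combines with the global sign into a product of the expected local signs. This requires a careful patchworking argument at each vertex, distinguishing the behavior at elliptic versus hyperbolic local branches and tracking the contribution of the $r$-term in (\ref{e-wel}) coming from tangencies of order divisible by $4$ (i.e., even $\alpha_{ij}$ or $\alpha'_{ij}$).
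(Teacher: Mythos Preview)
Your approach is essentially the one the paper has in mind. The paper's own proof is a two-line citation: the identity is declared to be ``a modified version of \cite[Theorem 5.9]{Mi1}, see details in \cite[Remark 4.29]{IS}'', together with the remark that $k^2ab/2-\varkappa/4$ is an integer by \cite[Proposition 4.5]{IS}. What you have outlined---degenerating $\bz,\bz'$ to tropical position, tropicalizing each $\xi$ to a curve $\Gamma\in{\mathcal T}_0(\alpha,\alpha',\{L(\alpha_{ij})\},\{L(\alpha'_{ij})\})$, decomposing $QI$ and $w^+$ vertex-by-vertex via patchworking, and reassembling into $\prod_V(q^{\mu(\Gamma,V)/2}-q^{-\mu(\Gamma,V)/2})$---is precisely the content of those references, adapted to the present relative setting. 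Your final algebraic step (a trivalent rational tree with $\#\alpha+\#\alpha'+1$ ends has $\#\alpha+\#\alpha'-1$ vertices, so the exponent of $q^{1/2}-q^{-1/2}$ comes out right, and the rest matches (\ref{ertv7}) and (\ref{ertv11})) is correct.

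One minor caution on the vertex-local step: the slogan ``exactly two oriented real lifts per vertex with quantum indices $\pm2\mu_V$'' is a compression of a more delicate statement. What is actually proved in \cite{Mi1} and \cite{IS} is that the \emph{signed generating function} of oriented real local lifts at $V$, weighted by $w^+$ and graded by local quantum index, equals $q^{\mu_V/2}-q^{-\mu_V/2}$; depending on the vertex this may or may not literally arise from two curves, and the interaction of the elliptic-node count with the $r$-term in (\ref{e-wel}) at ends with $\alpha_{ij}$ even is where the bookkeeping sits. You have correctly flagged exactly this as the main obstacle, and it is exactly what the cited references supply.
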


\begin{proof}
This is, in fact, a modified version of \cite[Theorem 5.9]{Mi1}, see details in \cite[Remark 4.29]{IS}. Note also that the exponent $k^2ab/2-\varkappa/4$ of $(-1)$ in the above formula is an integer (see \cite[Proposition 4.5]{IS}).
\end{proof}

\section{The case of $q=-1$}\label{secrtv6}

In the family of algebras (\ref{ertv8}), consider the specialization at $q=-1$:
$$\C[\widehat x,\widehat x^{-1},\widehat y,\widehat y^{-1}][[t]],\quad\text{where}\ \widehat x\widehat y=-\widehat y\widehat x.$$
For heuristic reasons (see \cite{BG,GSh}), $q=-1$ should correspond to enumeration of real curves. We show that this indeed is the case.

Consider the partitions $\alpha$ and $\alpha'$ of $ka$ and $kb$, respectively, introduced in Section \ref{secrtv4}.
Recall that the tropical toric surface $X_{a,b}(\T)$ associated with the triangle $T_{a,b}=\conv\{(0,0),(a,0),(0,b)\}$ can be identified with $T_{a,b}$, while the tropical toric divisors are
$$D_a^{trop}=[(0,0),(a,0)],\ D_b^{trop}=[(0,0),(0,b)],\ D_{out}^{trop}=[(a,0),(0,b)]$$
(see \cite[Chapter 3]{MiR}). Sequences $\bx\subset D_a^{trop}$, resp. $\bx'\subset D_b^{trop}$, of $\#\alpha$, resp. $\#\alpha'$, distinct points in general position determine tropical constraints $\{L(\alpha_{ij})\}_{i,j}$ and $\{L(\alpha'_{ij})\}_{i,j})$, respectively, (see Section \ref{secrtv4}). These constraints determine a finite set ${\mathcal T}_0(\alpha,\alpha',\{L(\alpha_{ij})\}_{i,j},\{L(\alpha'_{ij})\}_{i,j})$ of trivalent plane rational tropical curves and the well-defined refined invariant $\widehat N^{trop}_{\alpha,\alpha'}(\bP_{ka},\bP_{kb})(q)$.

A relevant set of real algebraic curves to count is as follows. For each odd summand $\alpha_{ij}$ in $\alpha$, we pick a real point $z_{ij}\in D_a\subset X_{a,b}$ and for each even summand $\alpha_{ij}$ in $\alpha$, we pick a pair of complex conjugate points $(z_{ij},\overline z_{ij})\subset D_a$. Denote the obtained sequence by $\bz$. In a similar way, we construct a conjugation-invariant sequence $\bz'\subset D_b$. Denote by ${\mathcal MC}^\R_{0,n}(X_{a,b},k,\alpha,\alpha',\bz,\bz')$ \footnote{Note the difference in the definition of ${\mathcal M}_{0,n}^\R(X_{a,b},2k,2\alpha,2\alpha',\bz,\bz')$ from Section \ref{secrtv7} and of ${\mathcal MC}_{0,n}^\R(X_{a,b},k,\alpha,\alpha',\bz,\bz')$: in the former case, the constraint $\bz\cup\bz'$ is totally real, while in the latter case, it consists of real and pairs of complex conjugate points.}
the space of (equivariant) isomorphism classes of stable conjugation-invariant maps of real $n$-marked rational curves $\qquad$ $[\bn:(\PP^1,(\bu,\bu',u_0))\to X_{a,b}]$, where $\bn_*[\PP^1]=k\Delta$,
\begin{align*}n=&\#\bz+\#\bz'+1=\#\{\alpha_{ij}\equiv1\mod2\}+\#\{\alpha'_{ij}\equiv1\mod2\}\\ &+2\#\{\alpha_{i'j'}\equiv0\mod2\}
+2\#\{\alpha'_{i'j'}\equiv0\mod2\}+1,\end{align*}
and
\begin{enumerate}\item[(MC1)] the sequence $\bu$ (resp., $\bu'$) contains $\#\{\alpha_{ij}\equiv1\mod2\}$ (resp., $\#\{\alpha'_{ij}\equiv1\mod2\}$) real points and $\#\{\alpha_{i'j'}\equiv0\mod2\}$ (resp., $\#\{\alpha'_{i'j'}\equiv0\mod2\}$) pairs of complex conjugate points; the point $u_0$ is real;
\item[(MC2)] $\bn(\bu)=\bz$ so that $\bn^*(z_{ij})=\alpha_{ij}u_{ij}$ for all $i,j$ such that $\alpha_{ij}$ is odd, and $\bn^*(z_{ij})=\frac{\alpha_{ij}}{2}u_{ij}$ for all $i,j$ such that $\alpha_{ij}$ is even; in a similar way, we require $\bn(\bu')=\bz'$ and $\bn^*(z'_{ij})=\alpha'_{ij}u_{ij}$ or $\bn^*(z'_{ij})=\frac{\alpha'_{ij}}{2}u_{ij}$;
    \item[(MC3)] $z_0:=\bn(u_0)\in D_{out}$ so that $\bn^*(z_0)=ku_0$.
\end{enumerate}

Assuming that the sequences $\bz\subset D_a$, $\bz'\subset D_b$ are in general position, we derive from Lemma \ref{lrtv1}
that ${\mathcal MC}^\R_{0,n}(X_{a,b},k,\alpha,\alpha',\bz,\bz')$ is finite. For each element $\xi=[\bn:(\PP^1,(\bu,\bu',u_0))\to X_{a,b}]\in{\mathcal MC}^\R_{0,n}(X_{a,b},k,\alpha,\alpha',\bz,\bz')$,  
we define the {\it Welschinger sign} (cf. (\ref{e-wel}))
$$w(\xi)=(-1)^{e(\xi)},$$
where $e(\xi)$ is the sum of the numbers $e(\xi,z)$ over all real singular points of $\bn(\PP^1)$.

\begin{proposition}\label{prtv1}
Suppose that $a,b\in\N$ are coprime, $k\in\N$ is odd, and $\alpha,\alpha'$ are ordered partitions of $ka$ and $kb$, respectively. Let conjugation-invariant sequences $\bz\subset D_a$ and $\bz'\subset D_b$ be associated with the partitions $\alpha$ and $\alpha'$, respectively, as in the preceding paragraphs and are in general position on $D_a$ and $D_b$.
Then
\begin{equation}\widehat N^{trop}_{\alpha,\alpha'}(\bP_{ka},\bP_{kb})(-1)=(-1)^{(k-1)/2}\sum_{\xi\in{\mathcal MC}^\R_{0,n}(X_{a,b},k,\alpha,\alpha',\bz,\bz')}w(\xi).\label{ertv10}\end{equation}
\end{proposition}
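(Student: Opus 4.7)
The plan is to prove (\ref{ertv10}) by a tropical limit argument, reducing the real enumerative count on the right-hand side to a sum over tropical curves $\Gamma \in {\mathcal T}_0(\alpha,\alpha',\{L(\alpha_{ij})\},\{L(\alpha'_{ij})\})$ and matching the local contribution at each vertex of $\Gamma$ with the refined Block-G\"ottsche weight $\widehat{\mu}(\Gamma)(-1)$. The arithmetic starting point is that $[m]_{-1} = (-1)^{(m-1)/2}$ for odd $m$ and $[m]_{-1} = 0$ for even $m$. At the $\alpha_{ij}$-weighted end of any $\Gamma$, the adjacent trivalent vertex has Mikhalkin multiplicity divisible by $\alpha_{ij}$, so the ratio $[\mu(V)]_q/[\alpha_{ij}]_q = [\mu(V)/\alpha_{ij}]_{q^{\alpha_{ij}}}$ is a Laurent polynomial in $q^{\pm 1/2}$, finite at $q=-1$. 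Hence $\widehat{\mu}(\Gamma)(-1)$ is well-defined for each $\Gamma$, even when some $\alpha_{ij}$ are even.

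Next, I would apply the Shustin-type correspondence theorem for real rational curves with relative constraints, in the form developed in \cite{IS}, to identify each element $\xi \in {\mathcal MC}^\R_{0,n}(X_{a,b},k,\alpha,\alpha',\bz,\bz')$ with a tropical curve $\Gamma$ plus a Viro-type patchworking assignment at every trivalent vertex and at every end. At an end of odd weight $\alpha_{ij}$, the assignment is forced by the unique real tangency point $z_{ij}$; at an end of even weight, it is forced by the conjugate pair $(z_{ij}, \bar{z}_{ij})$, which produces two complex conjugate branches over that end of $\Gamma$. The Welschinger sign $w(\xi) = (-1)^{e(\xi)}$ then factors as a product over trivalent vertices of $\Gamma$ of local sign contributions. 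The crucial local identity is that, summed over the real lifts of a single vertex $V$ with the forced end data fixed, the total Welschinger contribution equals $[\mu(V)]_{-1}/\prod_{e}[w_e]_{-1}$, where $e$ runs over the forced ends incident to $V$ and $w_e$ is the corresponding end weight; in particular it vanishes when the residual multiplicity is even, and equals $(-1)^{(\mu(V)-1)/2}$-type signs when it is odd.

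Combining these vertex contributions with the factor $[k]_{-1} = (-1)^{(k-1)/2}$ coming from the single end of weight $k$ on $D_{out}$ (which terminates at the real point $z_0 \in D^+_{out}$ with multiplicity $k$), and summing over $\Gamma \in {\mathcal T}_0$, gives
\[
\sum_{\xi\in {\mathcal MC}^\R_{0,n}(X_{a,b},k,\alpha,\alpha',\bz,\bz')} w(\xi) \;=\; (-1)^{(k-1)/2} \sum_\Gamma \widehat{\mu}(\Gamma)(-1) \;=\; (-1)^{(k-1)/2}\,\widehat{N}^{trop}_{\alpha,\alpha'}(\bP_{ka},\bP_{kb})(-1),
\]
which rearranges to (\ref{ertv10}). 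The independence of the left-hand side from the choice of generic $\bz, \bz'$ (needed for the argument to make sense as an identity of well-defined invariants) is precisely the content of Section \ref{app}. The hardest step is the vertex identity in the presence of mixed-parity end weights and conjugate-pair constraints: one must analyze by Viro patchworking sign rules that the Welschinger-weighted count of real lifts of a vertex forced by a complex conjugate pair of tangency points reproduces $[\mu(V)]_q/[\alpha_{ij}]_q$ at $q=-1$, which entails the complete pairwise cancellation of complex conjugate elliptic nodes in the even-$\mu(V)$ case. This is a refinement of \cite[Theorem 5.9]{Mi1} adapted to the relative setting with mixed parities, paralleling the technology invoked in the proof of Proposition \ref{prtv2}.
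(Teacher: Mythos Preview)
Your overall strategy---pass to a tropical limit, invoke a real correspondence theorem, and match tropical-curve-by-tropical-curve with $\widehat\mu(\Gamma)(-1)$---is the same as the paper's. But three points in your execution are wrong or incomplete.

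First, the finiteness argument for $\widehat\mu(\Gamma)(-1)$ by pairing each even-weight end with its adjacent vertex fails when a trivalent vertex is incident to two even-weight constrained ends: then $[\mu(V)]_q/([\alpha_1]_q[\alpha_2]_q)$ has a pole at $q=-1$. The paper's bookkeeping is global: one takes the subgraph $G(\Gamma)\subset\Gamma$ of even-weight edges, observes that each tree component $G_0$ has $e(G_0)+v_1(G_0)-1$ vertices (zeros) against $e(G_0)$ ends (poles), so the net order is $v_1(G_0)-1\ge0$. This also shows $\widehat\mu(\Gamma)(-1)=0$ whenever some $G_0$ has $v_1(G_0)>1$, which is precisely the vanishing mechanism you need but do not derive.

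Second, and more seriously, your ``vertex identity'' is asserted, not proven, and in the mixed-parity case the Welschinger-weighted real count does not factor vertex-by-vertex in the way you describe. The paper instead applies the correspondence results of \cite{Sh06} (Theorem 3.1 and Lemmas 2.3--2.8 there) to obtain, for each $\Gamma$, a closed formula $\bi^s\cdot\prod_{V\in G(\Gamma)\cap\Gamma^0}\mu(\Gamma,V)\cdot\prod_{\alpha_{ij}\text{ even}}\alpha_{ij}^{-1}\cdot\prod_{\alpha'_{ij}\text{ even}}(\alpha'_{ij})^{-1}$ with $s=\sum_V 2I(\Gamma,V)+2v_3(G(\Gamma))$, valid exactly when every component of $G(\Gamma)$ has a single univalent vertex, and zero otherwise. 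Matching this with the tropical side requires a global mod-$4$ comparison of $s$ against the tropical exponent $r=\sum_V(\mu(\Gamma,V)-1)+\#\alpha+\#\alpha'-ka-kb$, carried out via Pick's formula and the edge--vertex count of the trivalent tree.

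Third, the factor $(-1)^{(k-1)/2}$ does \emph{not} arise from a $[k]_{-1}$ at the $D_{out}$-end: there is no $[k]_q^{-1}$ in $\widehat\mu(\Gamma)(q)$, since that end is unconstrained. In the paper the factor emerges from the mod-$4$ comparison just described, specifically from the terms $1-k$ in the expression for $s$; this is also where the hypothesis that $k$ is odd is genuinely used.
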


\begin{proof} {\bf(1)} In Proposition \ref{lrtv3} (Section \ref{app}), we show that the right-hand side of (\ref{ertv10}) does not depend on the choice of generic sequences $\bz,\bz'$. Thus, here we prove relation (\ref{ertv10}) under the assumption that $\bz,\bz'$ are in ``a neighborhood of a general tropical limit". This means that $\bz,\bz'$ are taken in a one-parameter family
$$z_{ij}=c_{ij}t^{x_{ij}}\in D_a,\ x_{ij}\in\Q^*,\ c_{ij}\in\begin{cases}\R^*,\quad & \alpha_{ij}\equiv1\mod2,\\
\C\setminus\R,\quad &\alpha_{ij}\equiv0\mod2,\end{cases}$$
$$z'_{ij}=c'_{ij}t^{y_{ij}}\in D_b,\ y_{ij}\in\Q^*,\ c'_{ij}\in\begin{cases}\R^*,\quad & \alpha'_{ij}\equiv1\mod2,\\
\C\setminus\R,\quad &\alpha'_{ij}\equiv0\mod2,\end{cases}$$
for all relevant $i,j$, where $0<t\ll1$, and all constants $x_{ij}$, $y_{ij}$, $c_{ij}$, $c'_{ij}$ are in general position subject to the restriction in the above formulas. All sequences $\bz,\bz'$ in the family are in general position on $D_a$ and $D_b$, and, by \cite[Theorem 6]{Mi} (see also \cite[Proposition 6.1]{Sh05} and \cite[Theorem 3.1]{Sh06}) each element $\xi\in{\mathcal MC}^\R_{0,n}(X_{a,b},k,\alpha,\alpha',\bz,\bz')$ yields a family that defines a tropical limit as $t\to0$, and this limit is encoded by a trivalent rational tropical curve $\Gamma\in{\mathcal T}_0(\alpha,\alpha',\{L(\alpha_{ij})\}_{i,j},\{L(\alpha'_{ij})\}_{i,j})$. 

\smallskip{\bf(2)} Suppose that all summands in $\alpha$ and $\alpha'$ are odd and all points in $\bz$ and $\bz'$ are real. Observe that the value $\widehat N^{trop}_{\alpha,\alpha'}(\bP_{ka},\bP_{kb})(-1)$ is well defined: the factors in (\ref{ertv11}) associated with the vertices are
$$\frac{\bi^{\mu(\Gamma,V)}-\bi^{-\mu(\Gamma,V)}}{\bi-\bi^{-1}}=\begin{cases}0,\quad &\text{if}\ \mu(\Gamma,V)\ \text{is even},\\
\bi^{\mu(\Gamma,V)-1},\quad &\text{if}\ \mu(\Gamma,V)\ \text{is odd},\end{cases}\quad\text{where}\ \bi^2=-1,$$ while the factors associated with the constrained ends contribute, in view of the assumption that all $\alpha_{ij},\alpha'_{ij}$ odd,
$$\prod_{i,j}\bi^{1-\alpha_{ij}}\cdot\prod_{i,j}\bi^{1-\alpha'_{ij}}.$$ In total, the contribution of a curve $\Gamma\in{\mathcal T}_0(\alpha,\alpha',\{L(\alpha_{ij})\}_{i,j},\{L(\alpha'_{ij})\}_{i,j})$ to $\qquad$ $\widehat N^{trop}_{\alpha,\alpha'}(\bP_{ka},\bP_{kb})(-1)$ equals $0$ if there is at least one even $\mu(\Gamma,V)$, or equals
\begin{equation}\bi^r,\quad r=\sum_{V\in\Gamma^0}(\mu(\Gamma,V)-1)+\#\alpha+\#\alpha'-ka-kb,\label{ertv12}\end{equation}
otherwise.

By \cite[Theorem 6]{Mi} (see also \cite[Proposition 6.1]{Sh05}), 
the sum of Welschinger signs of the elements $\xi\in{\mathcal MC}^\R_{0,n}(X_{a,b},k,\alpha,\alpha',\bz,\bz')$ tropicalizing to $\Gamma$
equals $0$ if there is at least one even $\mu(\Gamma,V)$, or equals
$$\prod_{V\in\Gamma^0}\bi^{2I(\Gamma,V)}$$
otherwise, where $\Gamma^0$ is the set of the vertices of $\Gamma$, $I(\Gamma,V)$ is the number of integral points in the lattice triangle dual to $V$. By Pick's theorem,
$$2I(\Gamma,V)=\mu(\Gamma,V)-P(\Gamma,V)+2,$$ where $P(\Gamma,V)$ is the lattice perimeter of the dual triangle (equivalently, the total weight of the edges of $\Gamma$ incident to $V$). Since $\Gamma$ is trivalent,
$$3\#\Gamma^0=2\#\Gamma^1_{fin}+\#\alpha+\#\alpha'+1,$$
where $\Gamma^1_{fin}$ is the set of finite edges of $\Gamma$, and since all finite edges of $\Gamma$ have odd weights, we obtain (cf. \cite[Proposition 2.3(5)]{IM}) that
\begin{equation}\sum_{V\in\Gamma^0}2I(\Gamma,V)\equiv\sum_{V\in\Gamma^0}\mu(\Gamma,V)-2\#\Gamma^1_{fin}-ka-kb-k+2\#\Gamma^0
\label{ertv15}\end{equation}
$$=\sum_{V\in\Gamma^0}(\mu(\Gamma,V)-1)+\#\alpha+\#\alpha'+1-ka-kb-k\mod4.$$
Comparing with (\ref{ertv12}), we derive (\ref{ertv10}).

\smallskip
{\bf(3)} Now we allow arbitrary summands in $\alpha$ and $\alpha'$.

Observe that
\begin{equation}(\bi+\tau)^s-(\bi+\tau)^{-s}=2s\bi^{s-1}\tau+O(\tau^2)\quad\text{as}\ \tau\to0,\ s\equiv0\mod2.
\label{ertv14}\end{equation}

Let $G(\Gamma)$ be a subgraph of $\Gamma\in{\mathcal T}_0(\alpha,\alpha',\{L(\alpha_{ij})\}_{i,j},\{L(\alpha'_{ij})\}_{i,j})$ formed by the edges of even weight. In particular, the infinite edges of $G(\Gamma)$ are the ends of $\Gamma$ merging to the points $x_{ij}\in\bx$, resp., $x'_{ij}\in\bx'$, corresponding to even $\alpha_{ij}$, resp. even $\alpha'_{ij}$. Note also that $G(\Gamma)$ does not contain bivalent vertices, and each vertex $V\in G(\Gamma)\cap\Gamma^0$ satisfies $\mu(\Gamma,V)\equiv0\mod2$.

Each component $G_0$ of $G(\Gamma)$ is a tree with $v_1(G_0)\ge1$ univalent vertices and $e(G_0)\ge1$ ends, and the total number of the vertices in $G_0$ equals $e(G_0)+v_1(G_0)-1$. It follows from (\ref{ertv14}) that if $v_1(G_0)>1$, then
$$\widehat N^{trop}_{\alpha,\alpha'}(\bP_{ka},\bP_{kb})(-1)=\lim_{\tau\to0}
\widehat N^{trop}_{\alpha,\alpha'}(\bP_{ka},\bP_{kb})(-1+\tau)=0.$$
Suppose now that $v_1(G_0)=1$ for each component $G_0$ of $G(\Gamma)$. Then
$$\widehat N^{trop}_{\alpha,\alpha'}(\bP_{ka},\bP_{kb})(-1)=\lim_{\tau\to0}
\widehat N^{trop}_{\alpha,\alpha'}(\bP_{ka},\bP_{kb})(-1+\tau)$$
$$=\bi^r\cdot\prod_{V\in G(\Gamma)\cap\Gamma^0}\mu(\Gamma,V)\cdot\prod_{\alpha_{ij}\equiv0\mod2}\frac{1}{\alpha_{ij}}\cdot\prod_{\alpha'_{ij}
\equiv0\mod2}\frac{1}{\alpha'_{ij}},$$
\begin{equation}r=\sum_{V\in\Gamma^0}(\mu(\Gamma,V)-1)+\#\alpha+\#\alpha'-ka-kb.\label{ertv16}\end{equation}

By \cite[Theorem 3.1]{Sh06} there is a correspondence between tropical curves $\Gamma\in{\mathcal T}_0(\alpha,\alpha',\{L(\alpha_{ij})\}_{i,j},\{L(\alpha'_{ij})\}_{i,j})$ and elements $\xi\in{\mathcal MC}^\R_{0,n}(X_{a,b},k,\alpha,\alpha',\bz,\bz')$ such that the sum of the Welschinger signs of those $\xi$ which tropicalize to a given $\Gamma$
\begin{itemize}\item either equals (cf. \cite[Lemmas 2.3, 2.4, 2.5, 2.7, and 2.8]{Sh06})
$$\bi^s\cdot\prod_{V\in G(\Gamma)\cap\Gamma^0}\mu(\Gamma,V)\cdot\prod_{\alpha_{ij}\equiv0\mod2}\frac{1}{\alpha_{ij}}\cdot\prod_{\alpha'_{ij}
\equiv0\mod2}\frac{1}{\alpha'_{ij}},$$
\begin{equation}s=\sum_{V\in\Gamma^0}2I(\Gamma,V)+2v_3(G(\Gamma)),\label{ertv17}\end{equation}
where $v_3(G(\Gamma))$ is the number of trivalent vertices of $G(\Gamma)$, provided that $v_1(G_0)=1$ for each component $G_0$ of $G(\Gamma)$,
\item or equals $0$, otherwise.
\end{itemize}
It remains to consider the case of $\Gamma$ such that each component of $G(\Gamma)$ contains only one univalent vertex, and then to compare $r$ and $s$ in formulas (\ref{ertv16}) and (\ref{ertv17}). To this end, we observe that
$$\sum_{V\in\Gamma^0}P(\Gamma,V)\equiv 2(\text{number of finite edges of}\ \Gamma\ \text{with odd weight})$$
$$+ka+kb+k\mod4$$
and that the number of finite edges of $\Gamma$ with even weight equals $v_3(G(\Gamma))$. Hence (cf. the computation in part {\bf(2)} of the proof)
$$s\equiv\sum_{V\in\Gamma^0}(\mu(\Gamma,V)-1)+\#\alpha+\#\alpha'+1-ka-kb-k\mod4,$$
and the relation (\ref{ertv10}) follows.
\end{proof}

\begin{remark}\label{rrtv2}
(1) A reasonable enumerative interpretation of the invariants $\qquad$ $\widehat N^{trop}_{\alpha,\alpha'}(\bP_{ka},\bP_{kb})(-1)$ is restricted to the case of odd $k$, covered in Proposition \ref{prtv1}. Indeed, if $k$ is even, then
\begin{itemize}\item in case of all summands in $\alpha$ and $\alpha'$ even, one obtains (see (\ref{ertv14}))
$$\lim_{\tau\to0}\widehat N^{trop}_{\alpha,\alpha'}(\bP_{ka},\bP_{kb})(-1+\tau)=\infty,$$
since the number of vertices in $\Gamma$ is one less than $\#\alpha+\#\alpha'$,
\item if there are odd summands in $\alpha$ and/or $\alpha'$, the number of vertices $V\in\Gamma^0$ such that $\mu(\Gamma,V)$ is even appears to be greater than the total number of even summands in $\alpha$ and $\alpha'$, and hence
    $$\lim_{\tau\to0}\widehat N^{trop}_{\alpha,\alpha'}(\bP_{ka},\bP_{kb})(-1+\tau)=0.$$
\end{itemize}

(2) If all $\alpha_{ij}$'s and $\alpha'_{ij}$'s as well as $k$ are odd, then one can recover the invariant $\widehat N^{trop}_{\alpha,\alpha'}(\bP_{ka},\bP_{kb})(-1)$ from the invariants $W_0^\varkappa(a,b,k,2\alpha,2\alpha')$, $\varkappa\in2\Z$, discussed in Section \ref{secrtv7}, since under the above condition, the factors next to $\widehat N^{trop}_{\alpha,\alpha'}(\bP_{ka},\bP_{kb})(q)$ in (\ref{edop1}) do not vanish at $q=-1$.
\end{remark}

\section{Relative Welschinger invariants}\label{app}

The following statement completes the proof of Proposition \ref{prtv1}.

\begin{proposition}\label{lrtv3} In the notation and under the hypotheses of Proposition \ref{prtv1}, the number
$$WC_0(X_{a,b},k,\alpha,\alpha',\bz,\bz'):=\sum_{\xi\in{\mathcal MC}^\R_{0,n}(X_{a,b},k,\alpha,\alpha',\bz,\bz')}w(\xi)$$
does not depend on the choice of generic sequences $\bz\subset D_a$, $\bz'\subset D_b$ (subject to restrictions imposed in the definition of ${\mathcal MC}^\R_{0,n}(X_{a,b},k,\alpha,\alpha',\bz,\bz')$).
\end{proposition}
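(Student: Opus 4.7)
The plan is to prove invariance by a wall-crossing argument in the configuration space of constraint points on the toric divisors. Connect two admissible configurations $(\bz_0,\bz_0')$ and $(\bz_1,\bz_1')$ by a generic smooth path $\{(\bz(s),\bz'(s))\}_{s\in[0,1]}$ preserving the combinatorial type (which marked points are real, which form complex-conjugate pairs, and the cyclic order on each toric divisor). For generic $s$, Lemma~\ref{lrtv1} together with the underlying dimension count guarantees that the moduli space $\mathcal{MC}^\R_{0,n}(X_{a,b},k,\alpha,\alpha',\bz(s),\bz'(s))$ is finite, consists of birational immersions smooth along the toric divisors whose only singularities are ordinary nodes, and is cut out transversely by the evaluation conditions. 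The signed count $WC_0$ is therefore locally constant and can change only at finitely many codimension-one walls.

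A dimension count classifies these walls as: (W1) a real cuspidal ($A_2$) singularity appears on a real component in the interior of $X_{a,b}$; (W2) two singular points of one curve collide, giving either a real tacnodal ($A_3$) singularity, a real ordinary triple point, or a collision of a real node with its complex-conjugate partner at a real point; (W3) the curve degenerates to a reducible rational curve whose two components are either both real or form a complex-conjugate pair, meeting at a single node, with the tangency and point constraints distributed compatibly. At each wall, analysis of the versal unfolding supplies a sign-preserving bijection between the moduli elements on the two sides. For (W1), crossing the cusp locus exchanges a curve carrying one extra real solitary node for a curve with a pair of complex-conjugate nodes, so their contributions to $w(\xi)$ cancel. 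For (W2), the local unfolding trades two real nodes for a pair of complex-conjugate nodes, so the parity of $e(\xi)$ is preserved; the triple-point and node-collision subtypes are handled analogously by inspection of the stratification of the $A_3$, $D_4$, and $A_1\tilde A_1$ strata. For (W3), the standard splitting-smoothing analysis relates $w$ of a real smoothing to the product of the Welschinger signs of the components, times an explicit local factor at the smoothing node.

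The main obstacle will be case (W3). One must enumerate all admissible combinatorial splittings of the degree $k\Delta$, of the multiplicity partitions $\alpha,\alpha'$, and of the marked points among the two components --- remembering that a complex-conjugate pair of components can only absorb complex-conjugate pairs of constraints --- and then verify, in explicit local coordinates at each smoothing node, that the Welschinger sign of any real smoothing of the reducible limit equals the product of the component Welschinger signs up to a universal factor depending only on the local node type. This local computation is carried out in very similar settings in \cite[Lemmas 2.3--2.8]{Sh06} and in the Welschinger-sign analysis of \cite{IS}; adapting it to the present data, which involves constraints of both real and complex-conjugate types on two distinct toric divisors, is the technical core of the proof.
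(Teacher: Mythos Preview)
Your overall strategy---wall-crossing along a path in the configuration space---is the paper's strategy as well, but your classification of the walls is the one appropriate to the \emph{absolute} Welschinger problem (point constraints in the interior), not to the present \emph{relative} problem (tangency constraints on toric divisors). This is a genuine gap: the walls you list are the easy ones, and the walls that actually carry the work are absent from your outline.

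Concretely, the paper's codimension-one degenerations are: (i) the map fails to be an immersion in the big torus (this subsumes your (W1) and (W2) and is dispatched by a single $h^1$-vanishing check plus a citation to \cite[Lemma~15]{IKS}); (ii)--(iii) two real constraint points $z_{ij},z_{i'j'}$ on the same toric divisor collide, with the image having either two local branches or one branch of multiplicity $\alpha_{ij}+\alpha_{i'j'}$ there; (iv) the image acquires an $A_{2s}$ singularity \emph{at} a real constraint point $z_{ij}\in D_a$ with $\alpha_{ij}=2s+1$ (this is not an interior cusp; the relevant miniversal deformation is taken relative to the tangency condition on $D_a$); and (v) the curve splits into two real pieces of degrees $k_1,k_2$ with $k_1+k_2=k$, which are forced to meet at the free point $z_0\in D_{out}$ with tangency orders $k_1,k_2$---not at a simple node in the torus. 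The local analysis of (v) uses Newton diagrams, a tropical modification, and Chebyshev polynomials, and the outcome depends on the parity of $k_1$; since $k$ is odd, a complex-conjugate pair of components (which would force $k_1=k_2$) never occurs. By insisting that your path ``preserve the cyclic order on each toric divisor'' you have excluded (ii)--(iii), but then your path cannot connect two configurations with different cyclic orders, so the argument is incomplete even on its own terms. You have also not addressed the disconnectedness of the configuration space coming from the choice of sign (positive or negative half of $D_a(\R)$, $D_b(\R)$) for each real constraint point; the paper handles this separately by invoking the tropical computation from the proof of Proposition~\ref{prtv1} near the tropical limit. Finally, your local description of (W1) is off: an $A_2$ singularity has $\delta=1$, so its equigeneric deformation produces a single node, not ``one solitary node versus a pair of complex-conjugate nodes''.
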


\begin{proof}
{\it Step 1.}
It is enough to verify the invariance of $WC_0(X_{a,b},k,\alpha,\alpha',\bz,\bz')$ in the following moves:
\begin{enumerate}\item[(M1)] One of the real points of $\bz$ jumps from the positive open interval of $D_a(\R)$ to the negative one, or vice versa. A similar move for real points of $\bz'$.
\item[(M2)] The sequences $\bz\subset D_a$ and $\bz'\subset D_b$ vary in a smooth family depending on one real parameter so that they remain disjoint from the intersection points of the toric divisors, the real points remain real, and the pairs of nonreal complex conjugate points remain pairs of nonreal complex conjugate points.
\end{enumerate}

Consider the move (M1). Given a generic sequence $\bz\subset D_a$, it yields a sequence of signs $\vec\eps=\{\eps_{ij}\}_{i,j}$ associated with odd $\alpha_{ij}$'s 
and defined so that $\eps_{ij}=1$ or $-1$ according as the point $z_{ij}$ belongs to the positive or the negative interval in $D_a(\R)$. Similarly, $\bz'$ yields a sequence of signs $\vec\eps'$.
We claim that for any pair of sign sequences $\vec\eps$, $\vec\eps'$, there exist a pair of generic sequences $\bz\subset D_a$, $\bz'\subset D_b$ that yield $\vec\eps$, $\vec\eps'$, respectively, and such that $WC_0(X_{a,b},k,\alpha,\alpha',\bz,\bz')$ takes the same value for all these pairs of sequences $\bz$, $\bz'$. Indeed, we can take $\bz$, $\bz'$ in a neighborhood of the tropical limit, and then each value $WC_0(X_{a,b},k,\alpha,\alpha',\bz,\bz')$ will be equal to $(-1)^{(k-1)/2}N^{trop}_{\alpha,\alpha'}(\bP_{ka},\bP_{kb})(-1)$ as shown in parts (2) and (3) of the proof of Proposition \ref{prtv1}.

\smallskip{\it Step 2.} For given sequences of signs $\vec\eps$ and $\vec\eps'$, the space ${\mathcal Z}(\vec\eps,\vec\eps')$ of pairs of the corresponding sequences $(\bz,\bz')$ is connected and has real dimension
$$n-1=\#\{\alpha_{ij}\ \text{odd}\}+2\#\{\alpha_{ij}\ \text{even}\}+\#\{\alpha'_{ij}\ \text{odd}\}+2\#\{\alpha'_{ij}\ \text{even}\}.$$
Denote by $\overline{\mathcal MC}^\R_{0,n}(X_{a,b},k,\alpha,\alpha',\vec\eps,\vec\eps')$ 
the closure in $\overline{\mathcal M}_{0,n}(X_{a,b},k\Delta)$ of the union of the sets ${\mathcal MC}^\R_{0,n}(X_{a,b},k,\alpha,\alpha',\bz,\bz')$, $(\bz,\bz')\in{\mathcal Z}(\vec\eps,\vec\eps')$, satisfying the conclusions of Lemma \ref{lrtv1}.
This space contains the union of top-dimensional {\it regular chambers}, where the fibers $\overline{\mathcal MC}^\R_{0,n}(X_{a,b},k,\alpha,\alpha',\bz,\bz')$ of $\overline{\mathcal MC}^\R_{0,n}(X_{a,b},k,\alpha,\alpha',\vec\eps,\vec\eps')$ over the pairs $(\bz,\bz')$ are finite and consists of elements 
satisfying the conclusion of Lemma \ref{lrtv1}.
The regular chambers are separated by walls of codimension one. Below we classify these walls and show that in all wall-crossing events (as well as in variation inside a regular chamber), the number $WC_0(X_{a,b},k,\alpha,\alpha',\bz,\bz')$ does not change.

\smallskip
{\it Step 3.} Consider a path $\{(\bz(\tau),\bz'(\tau))\}_{0\le\tau\le1}$ joining two generic pairs of sequences $(\bz(0),\bz'(0))$ and $(\bz(1),\bz'(1))$. We can suppose that the points of the sequences $\bz(\tau)$ and $\bz'(\tau)$ never enter the $\eta$-neighborhood $U_\eta$ of the intersection points of the toric divisors and the complex conjugate points of $\bz(\tau),\bz'(\tau)$ never enter the $\eta$-neighborhood $V_\eta$ of $X_{a,b}(\R)$, for some fixed $\eta>0$.

We claim that the elements $$\xi=[\bn:(\widehat C,\bu,\bu',u_0)\to X_{a,b}]\in\overline{\mathcal MC}^\R_{0,n}(X_{a,b},k,\alpha,\alpha',\vec\eps,\vec\eps')$$ that belong to the fibers of the projection $\overline{\mathcal MC}^\R_{0,n}(X_{a,b},k,\alpha,\alpha',\vec\eps,\vec\eps')\to{\mathcal Z}(\vec\eps,\vec\eps')$ over generic members $(\bz,\bz')$ of $(n-2)$-dimensional strata in the complement to the regular chambers, either satisfy the conclusion of Lemma \ref{lrtv1}, or are as follows (cf. \cite[Lemma 2.15]{IS}):
\begin{enumerate}\item[(i)] either $\widehat C=\PP^1$, $\bn:\PP^1\to X_{a,b}$ is birational onto its image, which is smooth along the toric divisors, and $\bz=\bn(\bu)$ consists of $n-1$ distinct points;
\item[(ii)] or $\widehat C=\PP^1$, $\bn:\PP^1\to X_{a,b}$ is an immersion onto its image $\bn(\PP^1)=C$, which is smooth along the toric divisors except for one real point $z_{ij}=z_{i'j'}\in\bz$, where $C$ has two smooth local branches; the sequence $\bz\cup\bz'$ contains only $n-2$ distinct points;
    \item[(iii)] or $\widehat C=\widehat C_1\cup\widehat C_2$, $\widehat C_1\simeq\widehat C_2\simeq\PP^1$, $u_{ij},u_{i'j'}\in\widehat C_2$, $\bn(\widehat C_2)=z_{ij}=z_{i'j'}$, $\bn:\widehat C_1\to X_{a,b}$ is an immersion with the image $C$ that is smooth along the toric divisors and intersects $D_a$ at the real point $z_{ij}=z_{i'j'}$ with multiplicity $\alpha_{ij}+\alpha_{i'j'}$; the sequence $\bz\cup\bz'$ contains $n-2$ distinct points;
        \item[(iv)] or $\widehat C=\PP^1$, $\bz$ consists of $n-1$ distinct points, $\bn:\widehat C\to X_{a,b}$ is an immersion outside one real point $u_{ij}$, and $C=\bn(\widehat C)$ has a singularity of type $A_{2s}$ at $z_{ij}$, where $\alpha_{ij}=2s+1$;
            \item[(v)] or $\widehat C=\widehat C_1\cup\widehat C_2\cup\widehat C_3$, $\widehat C_1\simeq\widehat C_2\simeq 
                \PP^1$, $\widehat C_1\cap\widehat C_2=\emptyset$, $\widehat C_1\cap\widehat C_3$ and $\widehat C_2\cap\widehat C_3$ are one-point sets, $u_0\in\widehat C_3$,
                $\bn_*[\widehat C_1]=k_1\Delta$, $\bn_*[\widehat C_2]=k_2\Delta$, $0<k_1<k_2<k$, $k_1+k_2=k$, $\bn(\widehat C_3)=z_0\in D_{out}$; the curves $C_1=\bn(\widehat C_1)$ and $C_2=\bn(\widehat C_2)$ are immersed and they are smooth along the toric divisors; the sequence $\bz\cup\bz'$ contains $n-1$ distinct points.
\end{enumerate}
In the next three steps we verify this list.

\smallskip
{\it Step 4.}
Let us recall that the configuration $\bz\cup\bz'\cup\{z_0\}$ is subject to the so-called Menelaus condition, see \cite[Page 139]{Mi1} and \cite[Section 2.1.2]{IS}. It means that the product of the coordinates of the points of $\bz\cup\bz'\cup\{z_0\}$ raised to the powers equal to the intersection multiplicites with the toric divisors at the corresponding points is an absolute constant. This yields the following:
\begin{itemize}\item since $k$ is odd, the configuration $\bz\cup\bz'$ uniquely determines the point $z_0\in D_{out}$;
\item since the family in move (M2) can be chosen generic, at most two points of $\bz\cup\bz'$ can collide at a moment and these points are real; it, furthermore, yields that if an element $[\bn:(\widehat C,\bu,\bu',u_0)\to X_{a,b}]\in\overline{\mathcal MC}^\R_{0,n}(X_{a,b},k,\alpha,\alpha',\vec\eps,\vec\eps')$ occurs in the family induced by the path $\{(\bz(\tau),\bz'(\tau))\}_{0\le\tau\le1}$, then $\widehat C$ contains at most two noncontractible components, and if there are two of them, then the subsequences of $\bz\cup\bz'$ corresponding to these components are in general position on $D_a\cup D_b$ subject to one condition that they determine the same point $z_0\in D_{out}$.
\end{itemize}

\smallskip
{\it Step 5.}
Next, we note that $\widehat C$ cannot contain components mapped onto toric divisors since the intersections of the image curve with the toric divisors are far from intersection points of the toric divisors, and all three toric divisors cannot split off since otherwise the arithmetic genus of $\widehat C$ would be positive.

We also claim is that no component of $\widehat C$ multiply covers a curve $C'\subset X_{a,b}$. Indeed, assume that $\widehat C'\subset\widehat C$ is one of two noncontractible components, and $\bn:\widehat C'\to C'$ is an $s$-multiple covering, $s\ge2$. In view of the observations made in Step 4 and in view of the conditions (MC2), (MC3) in the definition of ${\mathcal MC}_{0,n}^\R(X_{a,b},k\alpha,\alpha',\bz,\bz')$ (see Section \ref{secrtv6}), the map $\bn:\widehat C'\to C'$ must be ramified at each intersection point of $C'$ with the toric divisors with the same ramification index, which contradicts the Riemann-Hurwitz formula. Assume now that $\widehat C'\subset\widehat C$ is the only noncontractible component, and $\widehat C'\to C'$ is an $s$-multiple covering, $s\ge2$. Then this map must be ramified with the ramification index $s$ at each intersection point of $C'$ with the toric divisors, except for the point of collision $z=z_{i_1j_1}=z_{i_2j_2}$ (or $z=z'_{i_1j_1}=z'_{i_2j_2}$) for some $(i_1,j_1)\ne(i_2,j_2)$. It follows that $s$ divides $k$, and hence $s\ge3$. However, in the deformation induced by variation along the path $\{(\bz(\tau),\bz'(\tau))\}_{0\le\tau\le1}$ some $s-1\ge2$ local branches of $\bn:\widehat C'\to C'$ centered at $z$ must glue up yielding a component of a positive genus in contradiction to the rationality of the considered curves.

\smallskip{\it Step 6.} Suppose that $\widehat C$ contains two noncontractible components. Then by Lemma \ref{lrtv1} and the observations in Step 4, we are left with the case (v) of the list in Step 3.

Suppose that $\widehat C$ contains just one noncontractible component $\widehat C'$ and that all the points of $\bz\cup\bz'$ are distinct. We claim that this is the situation either of the case (i), or of the case (iv) in the list of Step 3. Indeed, if it is not the case (i), then $\bn:\widehat C'\to X_{a,b}$ is unibranch at each point of $\bz\cup\bz'\cup\{z_0\}$ and singular in at least one of these points. Note that then $n\ge4$, since otherwise, $\bn(\widehat C')$ is smooth along the toric divisors (see \cite[Lemma 3.5]{Sh05}).
By our assumption, $\bz\cup\bz'$ is a generic member of an $(n-2)$-dimensional family of such configurations, and we can suppose that this is a smooth element of the family and that the variation of $\bz\cup\bz'$ along the family induced an equisingular variation of $\bn(\widehat C')$. Suppose that $z_{i_0j_0}$ is a real singular point of $\bn(\widehat C')$, the center of a local singular branch $Q_0$. Fixing the position of this point, we get an equisingular family of dimension $\ge n-3\ge1$. Applying the inequalities of \cite[Theorem 2]{GuS} in the form of \cite[Lemma 2.1]{IKS18}, we obtain
$$ka+kb+k\ge2+\left(\sum_{(i,j)\ne(i_0,j_0)}(\alpha_{ij}-1)+\sum_{i,j}(\alpha'_{ij}-1)+(k-1)\right)+\alpha_{i_0j_0}$$
$$+(\ord Q_0-1)+\sum_Q(\ord Q-1)+n-4$$
$$=ka+kb+k+\left((\ord Q_0-1)+\sum_Q(\ord Q-1)-1\right),$$
where $Q$ ranges over all singular local branches of $\bn(\widehat C')$ in the torus $(\C^*)^2\subset X_{a,b}$. It follows that $\bn(\widehat C')$ is immersed outside toric divisors, and $\ord Q_0=2$. In particular, the singularity at $z_{i_0j_0}$ is of type $A_{2s}$, and $\alpha_{i_0j_0}=2s+1$ since $z_{i_0j_0}$ is real and $\alpha_{i_0j_0}$ must be odd. The same conclusion holds when the point $z_0$ is singular. Suppose that there are at least two singular points among $\bz\cup\bz'\cup\{z_0\}$, say $z_{i_1j_1}$ and $z_{i_2j_2}$ (real or complex conjugate), the centers of singular local branches $Q_1,Q_2$, respectively. Then $n\ge5$, since otherwise, the curve $\bn(\widehat C')$ would have a parametrization (in the suitable affine coordinates)
\begin{equation}x_1=c_1(t-t_1)^{ka},\quad x_2=c_2(t-t_2)^{p}(t-t_3)^q,\quad t_1\ne t_2\ne t_3\in\C^*,\ p+q=kb,\label{edop3}\end{equation} and one would easily verify that a singular local branch on toric divisors may occur only at $t=t_1$. So, we fix the position of $z_{i_1j_1},z_{i_2j_2}$ obtaining an equisingulat family of dimension $\ge n-4\ge1$, and again apply the inequalities of \cite[Theorem 2]{GuS} in the form of \cite[Lemma 2.1]{IKS18}:
$$ka+kb+k\ge2+\left(\sum_{(i,j)\ne(i_1,j_1),(i_2j_2)}(\alpha_{ij}-1)+\sum_{i,j}(\alpha'_{ij}-1)+(k-1)\right)$$
$$+\alpha_{i_1j_1}+\alpha_{i_2j_2}+(\ord Q_1-1)+(\ord Q_2-1)+n-5$$
\begin{equation}=ka+kb+k+\left((\ord Q_1-1)+(\ord Q_2-1)-1\right),\label{edop2}\end{equation}
which is a contradiction. Thus, the conditions of the case (iv) are verified.

Suppose that $\widehat C$ contains exactly one noncontractible component $\widehat C'$ and that two the points of $\bz\cup\bz'$ collide, say $z_{i_1j_1}=z_{i_2j_2}+:z$. Note that the configuration $(\bz\setminus\{z_{i_1j_1},z_{i_2j_2}\})\cup\{z\}\cup\bz'$ varies in an $(n-2)$-dimensional family; hence, the latter configuration must be in general position on $D_a\cup D_b$. If in addition, $\bn:\widehat C'\to X_{a,b}$ is unibranch at each point on the toric divisors, Lemma \ref{lrtv1} will imply the conditions of the case (iii) in the list of Step 3. The remaining option is that $\bn:\widehat C'\to X_{a,b}$ has two local branches $Q_1,Q_2$ centered at $z$ and is unibranch at each other point of $\bz\cup\bz'\cup\{z_0\}$. Fixing the position of $z$, we obtain an $(n-3)$-dimensional equisingular family of curves in $X_{a,b}$ and again apply the inequalities of \cite[Theorem 2]{GuS} in the form of \cite[Lemma 2.1]{IKS18}:
$$ka+kb+k\ge2+\left(\sum_{(i,j)\ne(i_1,j_1),(i_2j_2)}(\alpha_{ij}-1)+\sum_{i,j}(\alpha'_{ij}-1)+(k-1)\right)$$
$$+\alpha_{i_1j_1}+\alpha_{i_2j_2}+(\ord Q_1-1)+(\ord Q_2-1)+\sum(\ord Q-1)+n-4$$
$$=ka+kb+k+(\ord Q_1-1)+(\ord Q_2-1)+\sum(\ord Q-1),$$ and hence $Q_1,Q_2$ are smooth, and the curve $\bn(\widehat C')$ is immersed outside $\qquad$ $\qquad$ \mbox{$(\bz\setminus\{z_{i_1j_1},z_{i_2j_2}\})\cup\bz'\cup\{z_0\}$}. Assuming that at least two points in $(\bz\setminus\{z_{i_1j_1},z_{i_2j_2}\})\cup\bz'\cup\{z_0\}$ are singular, we proceed as in the preceding paragraph and come to the contradiction via the chain of relations (\ref{edop2}). Assuming that just one point in $(\bz\setminus\{z_{i_1j_1},z_{i_2j_2}\})\cup\bz'\cup\{z_0\}$ is singular, we first note that $n\ge5$. Indeed, otherwise we would get the parametrization (\ref{edop3}) for the curve $\bn(\widehat C')$, in which $t_1,t_2,t_3\in\R^*$ are distinct and satisfy the relations
$$(t_2-t_1)^{ka}=(t_3-t_1)^{ka},\quad p(t_2-t_1)^{p-1}+q(t_3-t_1)^{q-1}=0.$$
This would imply that $(t_2-t_1)=-(t_3-t_1)$, $p=q$, and $ka$ is even. Further on, $kb=2p$ would be even too contrary to the initial assumptions that $k$ is odd and $\gcd(a,b)=1$. Taking into account that $n\ge5$, we fix the position of the (real) singular point $z_{i_0j_0}\in(\bz\setminus\{z_{i_1j_1},z_{i_2j_2}\})\cup\bz'\cup\{z_0\}$ and of $z$ and then apply the inequalities of \cite[Theorem 2]{GuS} in the form of \cite[Lemma 2.1]{IKS18}:
$$ka+kb+k\ge2+\left(\sum_{(i,j)\ne(i_0,j_0),(i_1,j_1),(i_2j_2)}(\alpha_{ij}-1)+\sum_{i,j}(\alpha'_{ij}-1)+(k-1)\right)$$
$$+\alpha_{i_0j_0}+\alpha_{i_1j_1}+\alpha_{i_2j_2}+(\ord Q_0-1)+n-5$$
$$=ka+kb+k+(\ord Q_0-1),$$
which is a contradiction. Thus, the conditions of the case (ii) in the list of Step 3 are verified.

\smallskip
{\it Step 7.} Now we prove the constancy of the function $WC_0(X_{a,b},k,\alpha,\alpha',\bz,\bz')$ along the path $\{(\bz(\tau),\bz'(\tau))\}_{0\le\tau\le1}$ both, between wall-crossing events and in each wall-crossing event.

{\it7.1.} Considerations of moves between wall-crossing events and of the wall-crossing of type (i) are the same, and the constancy of $WC_0(X_{a,b},k,\alpha,\alpha',\bz,\bz')$ follows from \cite[Lemma 15]{IKS}, provided, we establish the following transversality condition (cf. \cite[Lemma 13]{IKS}): The linear system in $|{\mathcal O}_{X_{a,b}}(k\Delta)|$ formed by the curves intersecting the toric divisors at the points of $\bz$, $\bz'$ with multiplicities determined by $\alpha$ and $\alpha'$ (i.e., $\alpha_{ij}$ or $\alpha'_{ij}$ at a real point, and $\alpha_{ij}/2$ or $\alpha'_{ij}/2$ at a nonreal point), and at the point $z_0$ with multiplicity $k-1$, respectively, intersects transversally at $C=\bn(\widehat C)$ with the germ at $C$ of the family of rational curves in $|{\mathcal O}_{X_{a,b}}(k\Delta)|$. It amounts to the relation (cf. \cite[Proof of Lemma 13]{IKS})
\begin{equation}H^1(\PP^1,{\mathcal O}_{\PP^1}(-D-\sum_{i,j}\alpha_{ij}u_{ij}-\sum_{i,j}\alpha'_{ij}u'_{ij}-(k-1)u_0)\otimes\bn^*{\mathcal O}_{X_{a,b}}(k\Delta))=0,\label{ertv18}\end{equation} where $D$ is the double point divisor of degree
$$\deg D=2\sum_{z\in\Sing(C)}\delta(C,z)=C^2+CK_{X_{a,b}}+2=k^2ab-ka-kb-k+2.$$
Finally,
$$\deg({\mathcal O}_{\PP^1}(-D-\sum_{i,j}\alpha_{ij}u_{ij}-\sum_{i,j}\alpha'_{ij}u'_{ij}-(k-1)u_0)\otimes\bn^*{\mathcal O}_{X_{a,b}}(k\Delta))$$
$$=-k^2ab+ka+kb+k-2-ka-kb-(k-1)+k^2ab=-1>(2g-2)\big|_{g=0}=-2,$$
and hence (\ref{ertv18}) follows.

{\it7.2.} Suppose that $\xi\in\overline{\mathcal MC}^\R_{0,n}(X_{a,b},k,\alpha,\alpha',\vec\eps,\vec\eps')$ is a generic element of the family described in item (ii).
We can assume that the considered path is locally like that: all the points of $\bz'$ and all the points of $\bz$ except for $z_{ij}$ are fixed in general position, and the point $z_{ij}$ is mobile. Following the lines of \cite[Proof of Lemma 2.16, part (3)]{IS}, we consider two smooth families of curves in $|{\mathcal O}_{X_{a,b}}(k\Delta)|$:
\begin{itemize}\item the germ at $C=\bn(\widehat C)$ of the family of curves $C'\in|{\mathcal O}_{X_{a,b}}(k\Delta)|$ whose intersection with a small neighborhood of $z_{i'j'}$ is the union of two smooth branches, one branch intersects $D_a$ at $z_{i'j'}$ with multiplicity $\alpha_{i'j'}$ and  another branch intersects $D_a$ at a point close to $z_{i'j'}$ with multiplicity $\alpha_{ij}$;
\item the germ at $C$ of the family of rational curves intersecting the toric divisors at $\bz\setminus\{z_{ij},z_{i'j'}\}$ and at $\bz'$ with multiplicities determined by $\alpha$ and $\alpha'$, and at a point close to $z_0$ with multiplicity $k$.
\end{itemize}
The constancy of $WC_0(X_{a,b},k,\alpha,\alpha',\bz,\bz')$ in the considered wall-crossing follows from the fact that the two local branches described in the former item intersect only in hyperbolic nodes or complex conjugate nodes independently of the mutual position of these branches, and from the transversality of the intersection of the above two families at $C$, which, in turn, follows from the cohomology vanishing (\ref{ertv18}).

{\it 7.3.} Suppose that $\xi\in\overline{\mathcal MC}^\R_{0,n}(X_{a,b},k,\alpha,\alpha',\vec\eps,\vec\eps')$ is a generic element of the family described in item (iii). First, we contract the component $\widehat C_2\subset\widehat C$ allowing the marked points $u_{ij},u_{i'j'}$ to coincide. Then the constancy of $WC_0(X_{a,b},k,\alpha,\alpha',\bz,\bz')$ in the considered wall-crossing can be derived by the argument of the preceding paragraph, when we replace the first family by the germ at $C$ the closure of the family of curves $C'\in|{\mathcal O}_{X_{a,b}}(k\Delta)|$ whose intersection with a small neighborhood of $z_{i'j'}$ is one smooth branch that intersects $D_a$ at $z_{i'j'}$ with multiplicity $\alpha_{i'j'}$ and at another point close to $z_{i'j'}$ with multiplicity $\alpha_{ij}$. The required transversality condition reduces to the same relation (\ref{ertv18}).

{\it7.4.} Suppose that $\xi\in\overline{\mathcal MC}^\R_{0,n}(X_{a,b},k,\alpha,\alpha',\vec\eps,\vec\eps')$ is a generic element of the family described in item (iv), and let $z_{ij}\in D_a$ be a singular point of $C=\bn(\widehat C)$.
We first consider a local deformation of the singular point $(C,z_{ij})$ and confirm that along this deformation the parity of the number of elliptic nodes in a neighborhood of $z_{ij}$ is constant. Then we prove that the wall-crossing along the considered path realizes this deformation, and hence conserves the number $WC_0(X_{a,b},k,\alpha,\alpha',\bz,\bz')$.

Deformations of the germ $(C,z_{ij})$ are described in \cite[Section 2(3) and Lemma 3]{Sh17}. Namely, in suitable local coordinates, the germ $(C,z_{ij})$ is given by
\begin{equation}\sigma_{02}y^2+\sigma_{2s+1,0}x^{2s+1}+\text{h.o.t.}=0,\quad2s+1=\alpha_{ij},\ \sigma_{02},\sigma_{2s+1,0}\in\R^*.\label{ertv21}\end{equation}
The miniversal deformation $B(C,z_{ij};D_a)$ of the germ $(C,z_{ij})$ in the space of curves intersecting $D_a$ at $z_{ij}$ with multiplicity $2s+1$ is spanned by the coefficients
$$\sigma_{r,1},\quad 0\le r\le s.$$ The deformation we are interested in corresponds to the equigeneric stratum $\qquad$ $EG(C,z_{ij};D_a)\subset B(C,z_{ij};D_a)$ whose elements parameterize curves in a neighborhood of $z_{ij}$ having $s=\delta(C,z_{ij})$ nodes (in addition to the condition of intersection with $D_a$ at $z_{ij}$ with multiplicity $2s+1$). This is a smooth one-parameter deformation with the tangent line $B(C,z_{ij};D_a)\cap I_1$, where
$$I_1=\{g\in{\mathcal O}_{X_{a,b},z_{ij}}\ :\ \ord g\big|_{C,z_{ij}}\ge4s+1=2s+1+2\delta(C,z_{ij})\}.$$
Furthermore, nodal germs belonging to $EG(C,z_{ij};D_a)$ have only imaginary nodes and elliptic nodes, and the number of latter ones is $\delta(C,z_{ij})\mod2$.

Recall that the curve $C$ is immersed outside $z_{ij}$ and that the number of elliptic nodes that may appear in a real nodal equigeneric deformation of an immersed singular point $z\in \Sing(C)$ equals $e(\xi,z)$ modulo $2$.
Next, we verify that the linear system in $|{\mathcal O}_{X_{a,b}}(k\Delta)|$ formed by the curves $C'$ such that
the function germ defining $(C',z_{ij})$ belongs to $I_1$, intersects transversally with the family of curves $C''\in|{\mathcal O}_{X_{a,b}}(k\Delta)|$ having $N$ nodes in the big torus outside a neighborhood of $z_{ij}$ and crossing the toric divisors at $\bz\setminus\{z_{ij}\}$ and at $\bz'$ with multiplicities determined by $\alpha\setminus\{\alpha_{ij}\}$ and $\alpha'$, respectively, and at a point close to $z_0$ with multiplicity $k$. Such a transversality amounts to the $h^1$-vanishing condition which, in fact, coincides with (\ref{ertv18}).
Hence, there exists a smooth local path $(\bz(\tau),\bz'(\tau))$ crossing the considered wall and realizing the local deformation presented in the preceding paragraph. Thus, the number $WC_0(X_{a,b},k,\alpha,\alpha',\bz,\bz')$ remains constant.

{\it7.5.} Suppose that $\xi\in\overline{\mathcal MC}^\R_{0,n}(X_{a,b},k,\alpha,\alpha',\vec\eps,\vec\eps')$ is a generic element of the family described in item (v). The germs $(C_1,z_0)$ and $(C_2,z_0)$ are smooth and intersect the toric divisor $D_{out}$ at $z_0$ with multiplicities $k_1$ and $k_2$, respectively. We are interested in a deformation which turns the germ $(C_1\cup C_2,z_0)$ into an immersed cylinder. Such kind of deformations was considered in \cite[Section 2.3 and Lemma 4]{Sh17}, but here we use another approach, in the style on the preceding part 7.4.

\begin{figure}
\setlength{\unitlength}{1mm}
\begin{picture}(130,90)(0,0)
\thinlines
\put(5,10){\vector(0,1){30}}\put(5,10){\vector(1,0){55}}
\put(5,60){\vector(0,1){30}}\put(5,60){\vector(1,0){55}}
\put(75,60){\vector(0,1){30}}\put(75,60){\vector(1,0){55}}
\dashline{2}(5,20)(25,20)\dashline{2}(25,10)(25,20)
\dashline{2}(5,70)(25,70)\dashline{2}(25,60)(25,70)\dashline{2}(95,60)(95,70)


\thicklines
\put(5,10){\line(2,1){20}}\put(5,30){\line(2,-1){20}}
\put(25,20){\line(3,-1){30}}\put(5,10){\line(0,1){20}}
\put(5,80){\line(2,-1){20}}\put(25,70){\line(3,-1){30}}
\put(75,80){\line(2,-1){20}}\put(95,70){\line(3,-1){30}}
\put(75,70){\line(1,0){20}}\put(75,70){\line(5,-1){50}}\put(75,70){\line(0,1){10}}

\put(1,19){$1$}\put(1,29){$2$}\put(1,69){$1$}\put(1,79){$2$}\put(71,69){$1$}\put(71,79){$2$}
\put(23,6){$k_1$}\put(23,56){$k_1$}\put(93,56){$k_1$}
\put(54,6){$k$}\put(54,56){$k$}\put(124,56){$k$}
\put(25,50){(a)}\put(95,50){(b)}\put(25,0){(c)}

\end{picture}
\caption{Proof of Proposition \ref{lrtv3}, Part 7.5}\label{frtv1}
\end{figure}
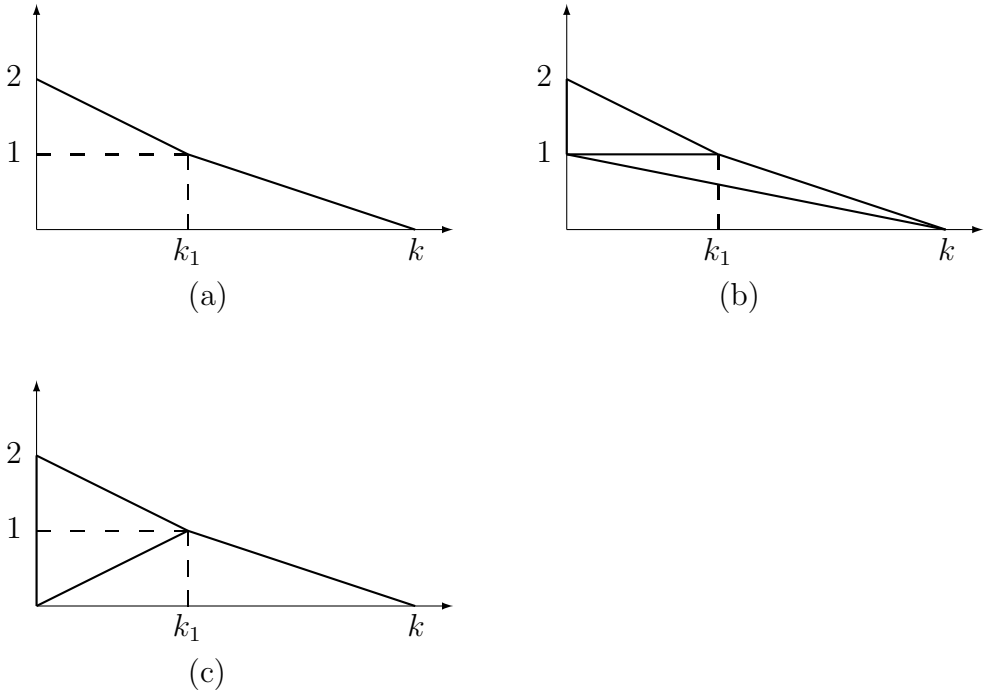

In suitable local coordinates, the Newton diagram at $z_0$ of a function defining the germ $(C_1\cup C_2,z_0)$ is as shown in Figure \ref{frtv1}(a). The miniversal deformation $B(C_1\cup C_2,z_0;D_{out})$ of the germ $(C_1\cup C_2,z_0)$ in the space of germs intersecting $D_{out}$ at $z_0$ with multiplicity $k$ is spanned by the coefficients
$$\sigma_{r,1},\quad 0\le r<k_1.$$
Furthermore, since the cylinder appearing in the deformation ${\mathcal D}$ must have $k_1-1$ nodes, the considered deformation ${\mathcal D}$ is, in fact, a one-parameter family. The tropicalization of ${\mathcal D}$ (understood as described in \cite[Section 2.5.4]{IMS}) defines a subdivision of the quadrangle with vertices $(0,1),(0,2),(k_1,1),(k,0)$, and the subdivision must be as shown in Figure \ref{frtv1}(b) with the truncation of the limit polynomial on the segment $[(0,1),(k_1,1)]$ having the form
\begin{equation}\sigma_{k_11}(x+\theta)^{k_1}y.\label{ertv19}\end{equation}
Indeed, otherwise the germ $(C_1\cup C_2,z_0)$ would turn into a surface with a handle when varying along ${\mathcal D}$. Formula (\ref{ertv19}) implies that the tangent line to ${\mathcal D}$ is given by
$$\sigma_{i0}=0,\quad 0\le i<k,\quad \sigma_{i1}=0,\quad 0\le i<k_1-1.$$ Note that these relations define the following ideal in ${\mathcal O}_{X_{a,b},z_0}$:
\begin{equation}I_2=\left\{\varphi\in{\mathcal O}_{X_{a,b},z_0}\ \Big|\ \begin{array}{c}
{\ord\varphi\big|_{(C_1,z_0)}\ge(C_1\cdot C_2)_{z_0}+k_1-1,}\\
{\ord\varphi\big|_{(C_2,z_0)}\ge(C_1\cdot C_2)_{z_0}+k_2-1}\end{array}\right\}\label{ertv20}\end{equation}
where $(C_1\cdot C_2)_{z_0}$ denotes the intersection number at the given point.
The higher terms depending on $\theta$ in the coefficients $\sigma_{i1}$, $0\le i<k_1$, can be found via the modification along the edge $[(0,1),(k_1,1)]$ (see \cite[Section 3.5]{Sh05} and \cite[Section 2.5.8]{IMS} \footnote{In \cite{Sh05,IMS} it is called ``refinement".}). It comes from the coordinate change $x=x'+\theta$ and leads to the new tropicalization associated with the subdivision shown in Figure \ref{frtv1}(c). The limit polynomial $F_{k_1}(x',y)$ with the Newton triangle $\conv\{(0,0),(0,2),(k_1,1)\}$ defines a rational curve, and its part restricted to the segment $[(0,1),(k_1,1)]$ is $P_{k_1}(x')y$, where $P_{k_1}$ is the $k_1$-th Chebyshev polynomial (up to a linear coordinate change and multiplication by a constant factor), see \cite[Proof of Proposition 6.1]{Sh05}. Furthermore, there are precisely $k_1$ polynomials $P_{k_1}$ matching the rationality condition and the fixed coefficients at $1$, $y^2$ and $(x')^{k_1}y$ (see \cite[Lemma 3.9]{Sh05}). To see this ramification analytically, we recall that (by the construction of the tropicalization) the exponents of $\theta$ in the coefficients of $F_{k_1}$ depend linearly on the exponents of $x'$ and $y$, whereas these exponents at the coefficients of $y^2$ and $(x')^{k_1}y$ vanish, and the constant term is $\sigma_{k,0}\theta^k$. Note also that the Chebyshev polynomial $P_{k_1}(x')$ contains only monomials $(x')^i$ with $k_1-i$ even. Thus, it follows that the minimal positive exponent of $\theta$ in the coefficients of $F_{k_1}$ is $\frac{k}{k_1}$. After the parameter change $\theta=(\theta')^{k_1}$, we obtain a regular parametrization of the family ${\mathcal D}$ by the parameter $\theta'$ containing $(\theta')^{k_1}$ as the minimal power, $(\theta')^k$ as the next power, and so on.

Over $\R$, we have the following picture. Recall that $k$ is assumed to be odd. If $k_1$ is odd, then the real part of ${\mathcal D}$ is a $C^1$-smooth germ. The elements of each of the components of ${\mathcal D}(\R)\setminus\{C\}$ correspond to the unique real Chebyshev polynomial $P_{k_1}(x')$ in the bunch of $k_1$ ones defined over $\C$, and the corresponding real rational curves $\{F_{k_1}=0\}$ have an even number of elliptic nodes (see \cite[Lemma 2.49(ii)]{IMS}).
If $k_1$ is even, then either there are no real Chebyshev polynomials $P_{k_1}(x')$, which means ${\mathcal D}(\R)=\{C\}$, or there are two real Chebyshev polynomials $P_{k_1}(x')$, which means that ${\mathcal D}(\R)$ is a real cuspidal curve such that the elements of one component of ${\mathcal D}(\R)\setminus\{C\}$ correspond to real rational curves $\{F_{k_1}=0\}$ with an odd number of elliptic nodes, while the elements of the other component of ${\mathcal D}(\R)\setminus\{C\}$ correspond to real rational curves $\{F_{k_1}=0\}$ without elliptic nodes (see \cite[Lemma 2.49(i)]{IMS}).

Note that the curve $C_1\cup C_2$ has
$$N=\frac{C_1^2+C_1K_{X_{a,b}}}{2}+1+\frac{C_2^2+C_2K_{X_{a,b}}}{2}+1+C_1C_2-k_1$$
nodes in the big torus. We claim that the following conditions imposed on the curves in the germ of $|{\mathcal O}_{X_{a,b}}(k\Delta)|$ at $C=C_1\cup C_2$ are transversal: (i) to have $N$ nodes in the big torus outside a neighborhood of $z_0$, (ii) to intersect the toric divisors $D_a$ and $D_b$ at $\bz$ and $\bz'$ with respective multiplicities determined by $\alpha$ and $\alpha'$, and (iii) to belong to the ideal $I_2$ defined by (\ref{ertv20}) at the point $z_0$. Indeed, this can be reformulated as the system of two $h^1$-vanishing relations:
$$H^1(\PP^1,{\mathcal O}_{\PP^1}(-\sum_{u_{ij}\in\widehat C_1}\alpha_{ij}u_{ij}-\sum_{u'_{ij}\in\widehat C_1}\alpha'_{ij}u'_{ij}-(k_1-1)u_{0,1})\otimes(\bn\big|_{\widehat C_1})^*{\mathcal O}_{X_{a,b}}(k_1\Delta))=0,$$
$$H^1(\PP^1,{\mathcal O}_{\PP^1}(-\sum_{u_{ij}\in\widehat C_2}\alpha_{ij}u_{ij}-\sum_{u'_{ij}\in\widehat C_2}\alpha'_{ij}u'_{ij}-(k_2-1)u_{0,2})\otimes(\bn\big|_{\widehat C_2})^*{\mathcal O}_{X_{a,b}}(k_2\Delta))=0,$$
where $u_{0,1}=\widehat C_1\cap\widehat C_3$, $u_{0,2}=\widehat C_2\cap\widehat C_3$. Both the relations hold since the degrees of the corresponding bundles are
$$(C_1^2+C_1K_{X_{a,b}}+1)-(C_1^2+C_1K_{X_{a,b}}+2)=-1>(2g-2)\big|_{g=0}=-2,$$
$$(C_2^2+C_2K_{X_{a,b}}+1)-(C_2^2+C_2K_{X_{a,b}}+2)=-1>(2g-2)\big|_{g=0}=-2,$$
respectively. The established transversality means that, in the wall-crossing
\begin{itemize}\item for an odd $k_1$, one of the counted real rational curves turns into another real rational curve having the same Welschinger sign,
\item for an even $k_1$, a pair of real rational curves having opposite Welschinger signs appear or disappear.
\end{itemize}

Thus, the number $WC_0(X_{a,b},k,\alpha,\alpha',\bz,\bz')$ remains constant in all wall-crossing events, which completes the proof of the proposition.
\end{proof}

\medskip
{\bf Acknowledgements.} The author is very grateful to Ilia Itenberg for stimulating discussions of the subject of the paper. I also thank Pierrick Bousseau for attracting my attention to the papers \cite{AB,Bou}. Special thanks are due to the unknown referee for numerous important remarks and corrections.

\end{document}